\newtheorem{thm}{Theorem}
\newtheorem{lem}{Lemma}
\newtheorem{defn}{Definition}[section]
\newtheorem{rem}{Remark}
\numberwithin{equation}{section} \numberwithin{lem}{section}
\numberwithin{thm}{section} \numberwithin{prop}{section}
\numberwithin{cor}{section} \numberwithin{rem}{section}
\title[Bipolar hydrodynamic model]{Subsonic steady-states for bipolar hydrodynamic model for semiconductors}
\author{Siying Li$^{\rm 1}$, Ming Mei$^{\rm 2, 3}$, Kaijun Zhang$^{\rm 1}$ and Guojing Zhang$^{\rm 1, *}$}
\thanks{*Corresponding author: Guojing Zhang.}
\thanks{E-mail addresses: lisiying630@163.com (S. Li), ming.mei@mcgill.ca (M. Mei), zhangkj201@nenu.edu.cn (K. Zhang), zhanggj100@nenu.edu.cn (G. Zhang).}
\begin{document}
  \maketitle
  \begin{center}
	{\footnotesize
		$^{\rm 1}$ School of Mathematics and Statistics, Northeast Normal University,\\
		Changchun, 130024, P.R.China.\\
		\smallskip
		$^{\rm 2}$ Department of Mathematics, Champlain College Saint-Lambert,\\
		Saint-Lambert, Quebec, J4P 3P2, Canada.\\
		$^{\rm 3}$ Department of Mathematics and Statistics, McGill University,\\
		Montreal, Quebec, H3A 2K6, Canada.}
  \end{center}
  \maketitle
  \date{}
	
\begin{abstract}
In this paper, we study the well-posedness, ill-posedness and uniqueness of the stationary 3-D radial solution to the bipolar isothermal hydrodynamic model for semiconductors. The density of electron is imposed with sonic boundary and interiorly subsonic case and the density of hole is fully subsonic case. It is difficult to estimate the upper and lower bounds of the holes due to the coupling of electrons and holes and the degeneracy of electrons at the boundary. Thus, we use the topological degree method to prove the well-posedness of solution. We prove the ill-posedness of subsonic solution under some conditions by direct mathematical analysis and contradiction method. The ill-posedness property shows significant difference to the unipolar model. Another highlight of this paper is the application of specific energy method to obtain the uniqueness of solution in two cases. One case is the relaxation time $\tau=\infty$, namely, the pure Euler-Poisson case; the other case is $\frac{j}{\tau}\ll 1$, which means that, when the current flow is sufficiently small and the relaxation time is sufficiently large both can satisfy.  
\end{abstract}

{\small {\bf Keywords:} hydrodynamic model, Euler-Poisson equations, radial solution, subsonic solution, steady-state.}

\section{Introduction}

The bipolar hydrodynamic (HD) model, proposed first by Bløtekjær in \cite{B1973}, is usually described for the charged fluid particles such as electrons and holes in semiconductor devices \cite{B1973, J2001, MRS1989}, which is written as the following system of Euler-Poisson equations:
\begin{equation}
	\left\{
	\begin{aligned}\label{eq1}
		&\rho _t+{\rm div} (\rho \vec{u})=0,\\
		&(\rho \vec{u})_t+{\rm div} (\rho \vec{u}\otimes\vec{u})+\nabla P_1(\rho)=\rho \vec{E}-\frac{\rho \vec{u}}{\tau},\\
		&n _t+{\rm div} (n \vec{v})=0,\\
		&(n\vec{v})_t+{\rm div} (n\vec{v}\otimes\vec{v})+\nabla P_2(n)=-n \vec{E}-\frac{n\vec{v}}{\tau},\\
		&\nabla\cdot \vec{E}=\rho -n-b(x).
	\end{aligned}
	\right.
\end{equation}
Here, $\rho$, $n$, $\vec{u}$, $\vec{v}$ and $\vec{E}$ represent the electron density, the hole density, the electron velocity, the hole velocity and the electric field, respectively. $P_1(\rho)$ and $P_2(n)$ are given functions which denote the pressure of electron and the pressure of hole. When the system is isothermal, the physical representation of the pressure functions are
\begin{align}\label{eq4} 
	P_1(\rho)=T\rho,\,\,\,\, P_2(n)=Tn,
\end{align}
where $T>0$ is the constant temperature. The function $b(x)>0$ is the doping profile denoting the density of impurities in semiconductor devices. The constant $\tau>0$ stands for the relaxation time.

The corresponding steady-state equation of \eqref{eq1} is as follows
\begin{equation}
	\left\{
	\begin{aligned}\label{eq3}
		&{\rm div} (\rho \vec{u})=0,\\
		&{\rm div} (\rho \vec{u}\otimes\vec{u})+\nabla P_1(\rho)=\rho \vec{E}-\frac{\rho \vec{u}}{\tau},\\
		&{\rm div} (n \vec{v})=0,\\
		&{\rm div} (n\vec{v}\otimes\vec{v})+\nabla P_2(n)=-n \vec{E}-\frac{n\vec{v}}{\tau},\\
		&\nabla\cdot \vec{E}=\rho -n-b(x).
	\end{aligned}
	\right.
\end{equation}

There are many researches on the stationary solution for the HD model of semiconductors. In 1990, Degond and Markowich \cite{DM1990} first proved the existence of subsonic solution for one-dimensional case, and obtained the uniqueness of solution with small electric current. Subsequently, lots of attentions has been paid to the steady subsonic flows with different boundary conditions as well as the higher dimensional case in \cite{BDC2014, BDC2016, DM1993, GS2005, H2011, HMWY2011, MWZ2021, NS2007, NS2009}. For the supersonic flows, Peng and Violet \cite{PV2006} showed the existence and uniqueness of supersonic solution with a strong supersonic background for one-dimensional model. Bae et al \cite{Bae} extended this work to two-dimensional case for pure Euler-Poisson equation, namely, the semiconductor effect is zero. The transonic flows has also been extensively studied in \cite{AMPS1991, DZ2020, G1992, GM1996, LRXX2011, LX2012, R2005}. Li-Mei-Zhang-Zhang \cite{LMZZ2017, LMZZ2018} proved in great depth the structure of all types with the sonic boundary when the doping profile is subsonic and supersonic, respectively. The sonic boundary condition means the system has degeneracy effect, which makes the system has strong singularity. Chen-Mei-Zhang-Zhang \cite{CMZZ2020} extended the
corresponding results to the transonic doping profile. Also, Chen et al \cite{CMZZ2021, CMZZ2022, CMZZ2023} studied the radial or the spiral radial subsonic, supersonic and transonic solutions in two and three dimensional spaces. Further, the existence and the regularity of smooth transonic solutions are also investigated by Wei et al in \cite{WMZZ2021}. Recently, Feng et al showed the structural stability of different types of solutions in \cite{FHM2022, FMZ2023}, respectively. Asymptotic limits of subsonic or sonic-subsonic solutions were studied in \cite{CLMZ2023, CG2000, P2002, P2003, PW2004}. 

However, the corresponding results for bipolar model are very limited. For isentropic case, Zhou and Li \cite{ZL2009} proved the existence and uniqueness of stationary solution with Dirichlet boundary conditions in one-dimensional space when the doping profile is zero. For isothermal case, Tsuge \cite{T2010} obtained the existence and uniqueness of the subsonic stationary solution in one-dimensional space for the electrostatic potential is small enough. Yu \cite{Y2017} studied the existence and uniqueness of the sbusonic stationary solution with insulating boundary conditions by the calculus of variations. Recently, Mu-Mei-Zhang \cite{MMZ2020} used the topological degree method to prove the well-posedness and ill-posedness of stationary subsonic and supersonic solutions with the electrons sonic boundary. The existence obtained in \cite{MMZ2020} relies strongly on the hole far from the sonic line, and the ill-posedness is partially given only for the case of $\tau=\infty$ and the density of hole is close enough to the sonic value. Moreover, the uniqueness of this kind of solution is still unclear and it will be very difficult to prove. This shows the bipolar model is significantly different to the unipolar model. As known to all, the bipolar model has more importance in physical practice. In this paper, we are devoted to studying the well-posedness, ill-posedness and uniqueness of radial solution to the steady-state equation \eqref{eq3} in the domain of 3-D hollow ball. 

Let us denote
\begin{align*}
	&r=|x|=\sqrt{{x_1}^2+{x_2}^2+{x_3}^2},\\
	&\rho=\rho(|x|)=\rho(r),\,\,\,\,\,\, n=n(|x|)=n(r),\\
	&\vec{u}=u(r)\cdot \frac{x}{r},\,\,\,\,\,\, \vec{v}=v(r)\cdot \frac{x}{r},\\
	&\vec{E}=E(r)\cdot\frac{x}{r},\\
	&b(x)=b(r),\\
	&\vec{J}:=\rho\vec{u}=\rho(r)u(r)\cdot\frac{x}{r}=J(r)\cdot\frac{x}{r},\,\,\,\,\,\, \mbox{the current density of electrons},\\ &\vec{K}:=n\vec{v}=n(r)v(r)\cdot\frac{x}{r}=K(r)\cdot\frac{x}{r},\,\,\,\,\,\, \mbox{the current density of holes}.   
\end{align*}
Then the system \eqref{eq3} becomes
\begin{equation}
	\left\{
	\begin{aligned}\label{eq2}
		&J_r+\frac{2J}{r} =0, \mbox{namely}, J=\frac{j_1}{r^2},\\
		&\frac{1}{2}\rho\left(\frac{J^2}{\rho^2}\right)_r+P_1(\rho)_r=\rho E-\frac{J}{\tau},\\
		&K_r+\frac{2K}{r} =0, \mbox{namely}, K=\frac{j_2}{r^2}\\
		&\frac{1}{2}n\left(\frac{K^2}{n^2}\right)_r+P_2(n)_r=-n E-\frac{K}{\tau},\\
		&E_{r}+\frac{2}{r}E=\rho-n-b(r),
	\end{aligned}
	\right.
\end{equation}
where $j_1$ and $j_2$ denote two constants. We can see that $r=0$ is the singular point, so we consider the system in hollow ball, namely, $r \in [\varepsilon_0,1]$ for any given $\varepsilon_0>0$ in this paper.

According to the terminology from gas dynamics, we call $c_e:=\sqrt{P_1^{'}(\rho)}=\sqrt{T}>0$ the sound speed of electron and and $c_h:=\sqrt{P_2^{'}(n)}=\sqrt{T}>0$ the sound speed of hole by \eqref{eq4}. Thus, the corresponding electron velocity $u$ and hole velocity $v$ of the system \eqref{eq2} are said to be subsonic (or sonic) if 
\begin{align}\label{eq5}
u=\frac{|J|}{\rho}<(=)c_e=\sqrt{P_1^{'}(\rho)}=\sqrt{T}\,\, \mbox{and}\,\, v=\frac{|K|}{n}<(=)c_h=\sqrt{P_2^{'}(n)}=\sqrt{T}.
\end{align}

Without loss of generality, we assume that $j_1=j$, $j_2=-j$ in \eqref{eq2}, and take $j=1$, $T=1$. Now dividing $\eqref{eq2}_2$ and $\eqref{eq2}_4$ by $\rho$ and $n$, respectively, we obtain 
\begin{equation}
	\left\{
	\begin{aligned}\label{eq7}
		&\frac{1}{2}\rho\left(\frac{1}{r^4\rho^2}\right)_r+\rho_r=\rho E-\frac{1}{\tau r^2},\\
		&\frac{1}{2}n\left(\frac{1}{r^4 n^2}\right)_r+n_r=-nE+\frac{1}{\tau r^2},\\
		&(r^2E)_r=r^2(\rho-n-b(r)).
	\end{aligned}
	\right.
\end{equation}

For the sake of simplicity, we introduce two new variables
\begin{align}\label{eq8}
	g(r):=r^2\rho(r),\,\,\,\, m(r):=r^2n(r)	
\end{align}
and define 
\begin{align}\label{eq9}
	B(r):=r^2b(r).	
\end{align}

Substituting \eqref{eq8}-\eqref{eq9} into \eqref{eq7}, we have
\begin{equation}
	\left\{
	\begin{aligned}\label{eq10}
		&\left(\frac{1}{g}-\frac{1}{g^3}\right)g_r+\frac{1}{\tau g}-\frac{2}{r}=E,\\
		&\left(\frac{1}{m}-\frac{1}{m^3}\right)m_r-\frac{1}{\tau m}-\frac{2}{r}=-E,\\
		&\left(r^2E\right)_r=g-m-B(r).
	\end{aligned}
	\right.
\end{equation}

Now from \eqref{eq5}, it is clear by a series of simple calculations that the stationary flow of electron and hole to \eqref{eq10} are called to be subsonic (or sonic) if
\begin{align}\label{eq11} 
	u<(=)1,\,\,\, \mbox{or equivalently},\,\,\, g>(=)1	
\end{align}
and
\begin{align}\label{eq12}
	v<(=)1,\,\,\, \mbox{or equivalently},\,\,\, m>(=)1.	
\end{align}

By \eqref{eq11}-\eqref{eq12}, we impose the sonic boundary conditions to \eqref{eq10} with electron
\begin{align}\label{eq13}
	g(\varepsilon_0)=g(1)=1,	
\end{align}
and a given boundary condition to hole is proposed as
\begin{align}\label{eq14}
	m(\varepsilon_0)=\eta_0,	
\end{align}
where the value of $\eta_0$ will be specified later. 

Next, we need to study the well-posedness, ill-posedness and uniqueness of solution to \eqref{eq10} with boundary conditions \eqref{eq13}-\eqref{eq14}, and the density of electron is considered interiorly subsonic case $(namely, \, g>1 \, \mbox{on} \, (\varepsilon_0,1))$ and the density of hole is considered fully subsonic case $(namely, \, m>1 \, \mbox{on}\, [\varepsilon_0,1])$.

Multiplying $\eqref{eq10}_1$ and $\eqref{eq10}_2$ by $r^2$ and taking the derivative with respect to $r$, then according to $\eqref{eq10}_3$, we get
\begin{equation}
	\left\{
	\begin{aligned}\label{eq15}
		&\left[r^2\left(\frac{1}{g}-\frac{1}{g^3}\right)g_r+\frac{r^2}{\tau g}-2r\right]_r=g-m-B(r),\\
		&\left[r^2\left(\frac{1}{m}-\frac{1}{m^3}\right)m_r-\frac{r^2}{\tau m}-2r\right]_r=m+B(r)-g,\\
		&g(\varepsilon_0)=g(1)=1,m(\varepsilon_0)=\eta_0.
	\end{aligned}
	\right.
\end{equation}
Adding equations $\eqref{eq15}_1$ to $\eqref{eq15}_2$, and dividing both sides by $r^2$ for the resultant equation, we derive 
\begin{align*}
\left(\frac{1}{g}-\frac{1}{g^3}\right)g_r+\left (\frac{1}{m}-\frac{1}{m^3}\right)m_r+\frac{1}{\tau}\left(\frac{1}{g}-\frac{1}{m}\right)-\frac{4}{r}=\frac{c}{r^2}.
\end{align*}
Integrating the above equation on $[\varepsilon _0,1]$, we obtain
\begin{align}\label{c1}
w(m(1))-w(\eta_0)+\frac{1}{\tau}\int_{\varepsilon_0}^{1}\left(\frac{1}{g}-\frac{1}{m}\right)dr+4{\rm ln}\varepsilon_0=c\cdot\frac{1-\varepsilon_0}{\varepsilon_0},
\end{align}
where $w(h):={\rm ln}h+\frac{1}{2h^2}$.

By $\eqref{eq10}_1$ and $\eqref{eq10}_2$, we can get
\begin{align}\label{c2}
\left(\frac{1}{g}-\frac{1}{g^3}\right)g_r+\left (\frac{1}{m}-\frac{1}{m^3}\right)m_r+\frac{1}{\tau}\left(\frac{1}{g}-\frac{1}{m}\right)-\frac{4}{r}=0.	
\end{align}

The equation \eqref{eq10} is equivalent to the equation \eqref{eq15} if and only if $c=0$. Now let us define
\begin{align}\label{c3}
w(\eta_1)-w(\eta_0)+\frac{1}{\tau}\int_{\varepsilon_0}^{1}\left(\frac{1}{g}-\frac{1}{m}\right)dr+4{\rm ln}\varepsilon_0=0.
\end{align}
Therefore, the equations \eqref{eq10} and \eqref{eq15} are equivalent if and only if 
\begin{align}\label{y9}
m(1)=\eta_1.
\end{align}

Notice that the model \eqref{eq15} is denegerate at the sonic boudary $g(\varepsilon_0)=g(1)=1$, the solution of \eqref{eq15} will lose certain regularity, and have to be in the weak form. Thus, we give the following definition of weak solution by \cite{LMZZ2017}.
\begin{defn}\label{def1}
	$(g(r), m(r))$ is called a pair of subsonic solutions, which means that interiorly subsonic solution $g(r)$ coupled with fully subsonic solution $m(r)$ of system \eqref{eq15} with $g(\varepsilon_0)=g(1)=1,m(\varepsilon_0)=\eta_0$, $\left(g(r)-1\right)^2\in H_0^1(\varepsilon_0,1),m(r)\in W^{2,\infty}(\varepsilon_0,1)$, and for any $\varphi\in H_0^1(\varepsilon_0,1)$ it holds that 
	\begin{align}\label{eq16}
		\int_{\varepsilon_0}^{1}\left[r^2\left(\frac{1}{g}-\frac{1}{g^3}\right)g_r+\frac{r^2}{\tau g}-2r\right]\cdot \varphi_rdr+\int_{\varepsilon_0}^{1}\left(g-m-B(r)\right)\varphi dr=0	
	\end{align}  
	and 
	\begin{align}\label{eq17}
		\int_{\varepsilon_0}^{1}\left[r^2\left(\frac{1}{m}-\frac{1}{m^3}\right)m_r-\frac{r^2}{\tau m}-2r\right]\cdot \varphi_rdr+\int_{\varepsilon_0}^{1}\left(m+B(r)-g\right)\varphi dr=0.	
	\end{align} 
\end{defn}

\begin{rem}\label{r1}
	The identity \eqref{eq16} is well-defined for $\left(g(r)-1\right)^2\in H_0^1(\varepsilon_0,1)$ and $\varphi\in H_0^1(\varepsilon_0,1)$, which is equivalent to 
	\begin{align*}
		\int_{\varepsilon_0}^{1}\left[r^2\cdot\frac{g+1}{2g^3}((g-1)^2)_r+\frac{r^2}{\tau g}-2r\right]\cdot \varphi_rdr+\int_{\varepsilon_0}^{1}\left(g-m-B(r)\right)\varphi dr=0.	
	\end{align*}
	
	Once $(g(r),m(r))$ is obtained in equation \eqref{eq15}, then $(\rho(r),n(r))$ can be determined by \eqref{eq8}. According to \eqref{eq10}, we can further get the solution of the electric field $E(r)$ 
	\begin{align*}
	E(r)=\left(\frac{1}{g}-\frac{1}{g^3}\right)g_r+\frac{1}{\tau g}-\frac{2}{r}=\frac{(g+1)[(g-1)^2]_r}{2g^3}+\frac{1}{\tau g}-\frac{2}{r}.	
    \end{align*}

    Therefore, solving \eqref{eq7} amounts to finding the solution to \eqref{eq15} satisfying \eqref{c3}-\eqref{y9}.	
\end{rem}
Throughout the paper we assume that the doping profile $B(r)\in L^\infty(\varepsilon_0,1)$ and denote
\begin{align*}
	\underline{B}:=\underset{r\in(\varepsilon_0,1)}{{\rm essinf}}\,\, B(r)\,\,\,\, \mbox{and}\,\,\,\, \bar{B}:=\underset{r\in(\varepsilon_0,1)}{{\rm esssup}}\,\, B(r).
\end{align*}
Our main results are stated below.
\begin{thm}\label{th1}
For any $B(r)\in L^\infty(\varepsilon_0,1)$, $\tau>0$ and $\underline{m}>1$, there exists a constant $\eta^\ast(\underline{m}, \bar{B}, \tau)>1$ which only depends on $\underline{m}, \bar{B}$ and $\tau$, such that for any $\eta_0\ge \eta^\ast$, there are the pair of subsonic solutions $(g,m)\in C^{\frac{1}{2}}[\varepsilon_0,1]\times W^{2,\infty}(\varepsilon_0,1)$ and $m\ge \underline{m}+3$ on $[\varepsilon_0,1]$ to \eqref{eq15} with the conditions of \eqref{c3}-\eqref{y9}.
\end{thm}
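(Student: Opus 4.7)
The plan is to reformulate \eqref{eq15} as a fixed-point problem and apply the Leray--Schauder topological degree, in the spirit of \cite{MMZ2020}. I would introduce the unknown boundary value $\eta_1 := m(1)$ and a homotopy parameter $\lambda\in[0,1]$ deforming the fully coupled system to a decoupled reference problem whose degree can be computed. For each $\lambda$ and each candidate $(\tilde g,\tilde m,\tilde\eta_1)$ in an appropriate bounded set of $C[\varepsilon_0,1]\times C[\varepsilon_0,1]\times\mathbb{R}$, solve two \emph{decoupled} boundary-value problems: (i) the degenerate equation $\eqref{eq15}_1$ for $g$ with sonic data $g(\varepsilon_0)=g(1)=1$ and right-hand side depending on $\tilde m$, using the existence theory for the unipolar interiorly subsonic problem developed in \cite{LMZZ2017,CMZZ2021}; (ii) the non-degenerate equation $\eqref{eq15}_2$ for $m$ with Dirichlet data $m(\varepsilon_0)=\eta_0$ and $m(1)=\tilde\eta_1$ and right-hand side depending on $\tilde g$, which is a regular quasi-linear problem as long as $m$ stays uniformly above $1$. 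Finally, update $\eta_1$ through the compatibility relation \eqref{c3}. This defines a completely continuous operator $T_\lambda$ whose fixed points are precisely the weak solutions to the original coupled system.

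The decisive step is the derivation of a priori estimates, uniform in $\lambda$, that exclude solutions from the boundary of the chosen open set $\mathcal{O}$. For $g$, one needs $1\le g\le \overline{g}$ with an upper bound depending only on $\bar B,\tau$ and a crude bound on $m$; this follows from a maximum-principle argument applied to $\eqref{eq15}_1$ together with the standard weighted energy estimate that places $(g-1)^2$ in $H_0^1(\varepsilon_0,1)$ and hence $g\in C^{1/2}[\varepsilon_0,1]$, as in \cite{LMZZ2017}. For $m$, the heart of the matter is the quantitative lower bound $m\ge\underline{m}+3$: I would integrate $\eqref{eq15}_2$ against carefully chosen cut-off functions to track how the large upstream datum $m(\varepsilon_0)=\eta_0$ propagates throughout $[\varepsilon_0,1]$. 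Because $g$ is uniformly bounded and $B\in L^\infty$, the right-hand side $m+B-g$ of $\eqref{eq15}_2$ is positive as soon as $m$ is moderately large, which supplies a self-maintaining lower bound; the threshold $\eta^\ast(\underline{m},\bar B,\tau)$ is then chosen so large that any admissible solution automatically satisfies $\min m\ge\underline{m}+3$. The $W^{2,\infty}$ regularity of $m$ is a direct consequence of having $m$ bounded away from the sonic value $1$ and of the $L^\infty$ forcing.

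Once these bounds are established, the maps $T_\lambda$ have no fixed points on $\partial\mathcal{O}$, so homotopy invariance gives
\[
\deg(I-T_1,\mathcal{O},0)=\deg(I-T_0,\mathcal{O},0).
\]
The reference problem at $\lambda=0$ is engineered to be a genuinely decoupled system (for instance with $B\equiv 0$ and with the $\eta_1$-component reduced to the affine map defined by \eqref{c3} evaluated at explicit one-dimensional subsonic profiles), for which the degree can be directly computed to be $\pm 1$. This produces a fixed point of $T_1$, i.e.\ a weak solution $(g,m)\in C^{1/2}[\varepsilon_0,1]\times W^{2,\infty}(\varepsilon_0,1)$ of \eqref{eq15} satisfying \eqref{c3}--\eqref{y9} and the pointwise bound $m\ge\underline{m}+3$.

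The main obstacle is the pointwise lower bound on $m$. Because $m$ is coupled to the degenerate $g$-equation only through the forcing $\pm(g-m-B)$ and the \emph{nonlocal} integral constraint \eqref{c3}, and because $g$ is merely $C^{1/2}$ with possibly wild behavior near the sonic endpoints, no standard comparison principle applies directly to $m$. Tracking how a perturbation of $g$ propagates into a quantitative lower bound on $m$ is the most delicate point in the scheme, and it is precisely this difficulty that forces the threshold $\eta^\ast$ to depend on all three parameters $\underline{m},\bar B,\tau$ and restricts the theorem to the regime of large upstream hole density $\eta_0\ge\eta^\ast$.
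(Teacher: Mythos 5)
Your overall framework (degree theory plus a priori bounds, in the spirit of \cite{MMZ2020}) points in the right direction, but the proposal has a genuine gap at exactly the point you flag as ``the heart of the matter'': the lower bound $m\ge \underline{m}+3$. Your proposed mechanism --- that the right-hand side $m+B-g$ of $\eqref{eq15}_2$ is positive once $m$ is moderately large, giving a ``self-maintaining'' lower bound --- has the wrong logic. A maximum-principle argument at an interior minimum of $m$ only yields $m\ge g-B-2-\frac{2r}{\tau m}$ there, and near the sonic endpoints $g$ is close to $1$ while $B$ may be arbitrarily large, so nothing useful follows; the largeness of the upstream datum $\eta_0$ does not propagate by local arguments alone. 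The paper's proof uses the opposite implication, and it is nonlocal: adding the two equations of \eqref{eq15} and integrating (the identity leading to \eqref{yy63}) shows that on the bad set $\hat{\Omega}=\{\hat{m}\le\underline{m}+3\}$ one has $w_k(\hat g)\ge w(\eta_0)-w(\underline{m}+3)-\frac{4(1-\varepsilon_0)}{\tau}+8\ln\varepsilon_0$, and the threshold $\eta^\ast$ is \emph{defined} in \eqref{yy42} precisely so that this forces $\hat g\ge \underline{m}+\bar{B}+\frac{2}{\tau}+5$ on $\hat\Omega$; then testing the $m$-equation with $(\hat m-(\underline{m}+3))^-$ makes the source term nonpositive on $\hat\Omega$ (see \eqref{yy70}) and the weak maximum principle empties $\hat\Omega$. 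In other words, where $m$ threatens to be small, the coupling forces $g$ to be large --- not the other way round --- and this is where $\eta_0\ge\eta^\ast$ and the constraint \eqref{c3} actually enter. Without this step your degree argument has no admissible open set, since you cannot exclude fixed points on the part of the boundary where $\min m$ drops.

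A second, structural gap: you build the fixed-point map by solving the \emph{degenerate} equation $\eqref{eq15}_1$ with sonic data inside the operator. Well-definedness, continuity and compactness of that solution map (needed for Leray--Schauder theory) are not established --- the unipolar theory you cite gives existence of $C^{1/2}$ solutions, not a continuous solution operator on $C[\varepsilon_0,1]$, and the coefficient $\frac1g-\frac1{g^3}$ vanishes at the endpoints. The paper sidesteps this by first regularizing the degeneracy with the parameter $k$ (system \eqref{yy41}, coefficient $\frac1{g_k}-\frac{k}{g_k^3}$), applying degree theory to a \emph{linearized, uniformly elliptic} frozen-coefficient system \eqref{yy46} (where compactness and continuity are immediate from linear elliptic theory), proving the a priori bounds uniformly in $k$ (including the uniform interior bound \eqref{yy71} and the energy bounds $\|(g_k-1)^2\|_{H_0^1}\le C$, $\|m_k\|_{H^1}\le C$), and only then passing to the limit $k\to1$ to recover the $C^{1/2}\times W^{2,\infty}$ weak solution of \eqref{eq15} satisfying \eqref{c3}--\eqref{y9}. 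Your scheme would need either this regularization-plus-compactness layer or a separate proof that the degenerate solution operator is single-valued, continuous and compact; as written, both the operator-theoretic setup and the decisive lower bound on $m$ are missing.
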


\begin{thm}\label{th2}
For any $\bar{\eta}>1$, there exists $B^*=B^*(\bar{\eta})>1$ such that if $\eta_0<\bar{\eta}$ and $\underset{r\in(\alpha, \beta)}{{\rm inf}}B\ge B^*$, there is no subsonic solution to \eqref{eq10}, \eqref{eq13}-\eqref{eq14}. Here $\alpha=\varepsilon_0+\frac{1-\varepsilon_0}{4}$, $\beta=\varepsilon_0+\frac{3(1-\varepsilon_0)}{4}$.
\end{thm}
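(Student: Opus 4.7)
The plan is to argue by contradiction, assuming a subsonic pair $(g,m)$ exists for the given $B$ with $\inf_{(\alpha,\beta)}B\ge B^*$. Two ingredients are required: a uniform upper bound $g,m\le G^*$ depending only on $\bar\eta,\tau,\varepsilon_0$ (and \emph{not} on $B$), and a test-function estimate on the hole weak formulation \eqref{eq17} that yields a $B^*$-independent bound on $\int(m+B-g)\varphi\,dr$ for a fixed nonnegative bump $\varphi$ compactly supported in $(\alpha,\beta)$. A direct comparison of the two estimates then pins down an explicit threshold $B^*(\bar\eta)$ beyond which no subsonic solution can exist.

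For the uniform bound we exploit the compatibility identity \eqref{c2}, obtained by adding $\eqref{eq10}_1$ and $\eqref{eq10}_2$; crucially, the doping $B$ cancels. Integrating from $\varepsilon_0$ to $r$ with $g(\varepsilon_0)=1$ and $m(\varepsilon_0)=\eta_0$ yields
\[
w(g(r))+w(m(r))=\tfrac{1}{2}+w(\eta_0)+4\ln(r/\varepsilon_0)-\tfrac{1}{\tau}\int_{\varepsilon_0}^{r}\Bigl(\tfrac{1}{g}-\tfrac{1}{m}\Bigr)ds.
\]
The right-hand side is majorized by $C_3:=\tfrac{1}{2}+w(\bar\eta)+4\ln(1/\varepsilon_0)+(1-\varepsilon_0)/\tau$, using $\eta_0<\bar\eta$, the bound $|1/g-1/m|\le1$, and monotonicity of $w$ on $[1,\infty)$. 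Since $w(g),w(m)\ge w(1)=\tfrac{1}{2}$, each is at most $C_3-\tfrac{1}{2}$, so $g,m\le G^*:=w^{-1}(C_3-\tfrac{1}{2})$ on $[\varepsilon_0,1]$, with $G^*$ depending only on $\bar\eta,\tau,\varepsilon_0$.

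Now fix once and for all a nonnegative $\varphi\in C_c^\infty(\alpha,\beta)$ with $\int\varphi\,dr>0$, extended by zero to a member of $H_0^1(\varepsilon_0,1)$. Testing \eqref{eq17} against $\varphi$ and integrating the $r^2(1/m-1/m^3)m_r\,\varphi_r = r^2 w(m)_r\,\varphi_r$ contribution by parts (boundary contributions vanish since $\varphi_r$ has compact support in $(\alpha,\beta)$) gives
\[
\int_\alpha^\beta r^2 w(m)_r\,\varphi_r\,dr=-\int_\alpha^\beta w(m)\bigl(2r\varphi_r+r^2\varphi_{rr}\bigr)\,dr,
\]
whose absolute value is at most $w(G^*)\,\|2r\varphi_r+r^2\varphi_{rr}\|_{L^1}$. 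The remaining $\varphi_r$-terms coming from $-r^2/(\tau m)-2r$ are similarly bounded, and combining yields $\bigl|\int_\alpha^\beta(m+B-g)\varphi\,dr\bigr|\le C$ with $C=C(\bar\eta,\tau,\varepsilon_0,\varphi)$. On the other hand, $m\ge1$, $B\ge B^*$ and $g\le G^*$ on $\operatorname{supp}\varphi$, together with $\varphi\ge0$, force
\[
\int_\alpha^\beta(m+B-g)\varphi\,dr\ge(1+B^*-G^*)\int\varphi\,dr.
\]
Hence $(1+B^*-G^*)\int\varphi\le C$, and choosing $B^*(\bar\eta):=G^*+C/\!\int\varphi$ produces the required contradiction whenever $\inf_{(\alpha,\beta)}B\ge B^*$. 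The only delicate point is securing $G^*$ independently of $B^*$, and this is precisely what the cancellation of $B$ in \eqref{c2} delivers; once $G^*$ is in hand, the weak-formulation comparison is a routine integration by parts.
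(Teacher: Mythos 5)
Your proposal is correct, and its first half is essentially the paper's first step: both exploit the fact that adding $\eqref{eq10}_1$ and $\eqref{eq10}_2$ cancels $E$ and $B(r)$, so that integrating from $\varepsilon_0$ (as in \eqref{eq19}) yields an upper bound on $w(g)$ — and, in your version, also on $w(m)$ — depending only on $\bar{\eta},\tau,\varepsilon_0$ and not on the doping profile. Where you genuinely diverge is the contradiction mechanism. The paper stays with the electron equation: writing $\eqref{eq15}_1$ in the form \eqref{equa4}, it locates the maximum point $p$ of $w(g)$, integrates from a point $q\in(\varepsilon_0,\alpha)$ up to $p$ to get $w_r(g(q))\ge \frac{1}{q^2}\left(\frac{1-\varepsilon_0}{4}B^\ast-(\bar g+2)\right)-\frac{1}{\tau}$, and then integrates in $q$ over $(\varepsilon_0,\alpha)$ to force $w(g(\alpha))-w(g(\varepsilon_0))\ge w(\bar g)$, contradicting the uniform bound; this uses the $C^{1,\frac12}\cap W^{2,1}$ regularity of $w(g)$ from \cite{LMZZ2017} and a case split on which half of $[\varepsilon_0,1]$ contains $p$, and it produces the explicit threshold \eqref{g4}. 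You instead test the hole equation against a fixed bump $\varphi\in C_c^\infty(\alpha,\beta)$ and integrate by parts onto $\varphi$, so only the $L^\infty$ bound on $w(m)$ is needed (never a bound on $m_r$ or $g_r$): the flux terms are $B$-independent while $\int(m+B-g)\varphi\,dr\ge (1+B^\ast-G^\ast)\int\varphi\,dr$ grows linearly in $B^\ast$. This is legitimate — for such $\varphi$ the identity \eqref{eq17} follows directly from $\eqref{eq10}_2$ and $\eqref{eq10}_3$, so no equivalence issue with \eqref{eq15} arises, and degeneracy of the electron at the boundary never enters since $\operatorname{supp}\varphi\subset(\alpha,\beta)$ and all derivatives land on $\varphi$. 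Your route is more elementary (no maximum-point analysis, no extra regularity input, and it would work equally well with the electron weak formulation, in which case even the bound on $m$ is dispensable), at the mild cost of a less explicit $B^\ast$, which depends on the chosen bump; the paper's argument buys an explicit formula for $B^\ast(\bar\eta)$.
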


\begin{thm}\label{th3}
The solution obtained in Theorem \ref{th1} for \eqref{eq7} and thus for \eqref{eq2} will be unique when $\tau=\infty$ and $\frac{j}{\tau}\ll 1$.  	
\end{thm}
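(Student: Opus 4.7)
The plan is a specific energy estimate for the differences of two subsonic solutions, built on the strictly increasing entropy $w(h)=\ln h+\tfrac{1}{2h^{2}}$ that already appears in \eqref{c1}. Given two subsonic solutions $(g_i,m_i,E_i)$, $i=1,2$, of \eqref{eq7} with the compatibility \eqref{c3}-\eqref{y9}, I set $G := g_1-g_2$, $M := m_1-m_2$, $\mathcal{E} := E_1-E_2$, $\Phi_g := w(g_1)-w(g_2)$, $\Phi_m := w(m_1)-w(m_2)$. Using the identity $\left(\tfrac{1}{h}-\tfrac{1}{h^{3}}\right)h_r = w(h)_r$, subtracting \eqref{eq10} yields
\begin{equation*}
\Phi_g' = \mathcal{E}+\tfrac{G}{\tau g_1g_2}, \quad \Phi_m' = -\mathcal{E}-\tfrac{M}{\tau m_1m_2}, \quad (r^2\mathcal{E})_r = G-M,
\end{equation*}
with $\Phi_g(\varepsilon_0)=\Phi_g(1)=0$, $\Phi_m(\varepsilon_0)=0$, and $\Phi_m(1)=\tfrac{1}{\tau}\int_{\varepsilon_0}^{1}\bigl(\tfrac{G}{g_1g_2}-\tfrac{M}{m_1m_2}\bigr)dr$ obtained by integrating $(\Phi_g+\Phi_m)'$ on $[\varepsilon_0,1]$.

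The central step is to test the Poisson difference $(r^2\mathcal{E})_r=G-M$ against $\Phi_g-\Phi_m$ and integrate by parts. Substituting $(\Phi_g-\Phi_m)'=2\mathcal{E}+\tfrac{1}{\tau}\bigl(\tfrac{G}{g_1g_2}+\tfrac{M}{m_1m_2}\bigr)$ from the difference system, and noting that the only surviving boundary contribution is $-\Phi_m(1)\mathcal{E}(1)$, one arrives at the core identity
\begin{equation*}
2\!\int_{\varepsilon_0}^{1}\!r^2\mathcal{E}^2\,dr+\int_{\varepsilon_0}^{1}(G-M)(\Phi_g-\Phi_m)\,dr = -\Phi_m(1)\mathcal{E}(1)-\tfrac{1}{\tau}\!\int_{\varepsilon_0}^{1}\!r^2\mathcal{E}\bigl(\tfrac{G}{g_1g_2}+\tfrac{M}{m_1m_2}\bigr)\,dr.
\end{equation*}

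When $\tau=\infty$ the right-hand side vanishes identically, and moreover $(\Phi_g+\Phi_m)'\equiv 0$ together with $\Phi_g(\varepsilon_0)=\Phi_m(\varepsilon_0)=0$ force $\Phi_m\equiv -\Phi_g$; thus $(G-M)(\Phi_g-\Phi_m)=2(G\Phi_g+M\Phi_m)$. Since $w$ is strictly increasing on $[1,\infty)$, each of $G\Phi_g$ and $M\Phi_m$ is pointwise nonnegative, so the three nonnegative summands on the left must all vanish. In particular $\mathcal{E}\equiv 0$, and integrating $\Phi_g'=0$ with $\Phi_g(\varepsilon_0)=0$ gives $\Phi_g\equiv 0$; the strict monotonicity of $w$ on $[1,\infty)$ then yields $g_1\equiv g_2$, and an identical argument yields $m_1\equiv m_2$.

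For the perturbative regime $j/\tau\ll 1$ the right-hand side of the core identity is of order $O(1/\tau)$, and the goal is to absorb it into the nonnegative left-hand side. The a~priori bounds of Theorem~\ref{th1}, in particular $m\ge\underline m+3$ which keeps $w'(m)$ bounded below, give $M\Phi_m\gtrsim M^2$ and $|\Phi_m|\lesssim |M|$; the Poisson equation then recovers $G$ through $M$ and $\mathcal{E}$, so that Cauchy-Schwarz and Young's inequality can absorb the $1/\tau$-terms and the boundary correction $\Phi_m(1)\mathcal{E}(1)$ once $1/\tau$ is sufficiently small, forcing $\mathcal{E}\equiv 0$ and hence $g_1\equiv g_2$, $m_1\equiv m_2$ exactly as in the Euler-Poisson case. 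The main obstacle is the sonic degeneracy $w'(g)=(g^2-1)/g^3\to 0$ at the electron boundary points, which prevents $G\Phi_g$ from dominating $G^2$; this is precisely why Poisson must be used to recover $G$ indirectly, and it is the reason the conclusion is only stated for $j/\tau$ sufficiently small rather than for all finite $\tau$.
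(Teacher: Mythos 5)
Your $\tau=\infty$ argument is correct and is in essence the paper's own: the pointwise identity $w(g_1)-w(g_2)=w(m_2)-w(m_1)$, obtained by integrating the sum of the two momentum equations from $\varepsilon_0$, together with the monotonicity of $w$ on $[1,\infty)$, is exactly the mechanism the paper uses; whether one tests the $g$-equation with $(w(g_1)-w(g_2))^{\pm}$ (as the paper does) or tests the Poisson difference against $\Phi_g-\Phi_m$ (as you do) is a cosmetic reorganization of the same energy estimate.

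The case $j/\tau\ll 1$ is where your proposal has a genuine gap. Writing $\Phi_m=-\Phi_g+\psi$ with $|\psi|\le \frac{C}{\tau}\left(\|G\|_{L^1}+\|M\|_{L^1}\right)$, the right-hand side of your core identity contains terms of the type $\frac{1}{\tau}\left(\|G\|_{L^1}+\|M\|_{L^1}\right)^2$, $\frac{1}{\tau}\|\mathcal{E}\|_{L^2}\|G\|_{L^2}$ and the boundary term $\Phi_m(1)\mathcal{E}(1)$, all of which involve norms of $G=g_1-g_2$ (or the pointwise value $\mathcal{E}(1)$). The coercive part of your left-hand side is $\|\mathcal{E}\|_{L^2}^2+\int G\Phi_g+\int M\Phi_m$: the last term indeed controls $\|M\|_{L^2}^2$ because $m_i\in[\underline{m}+3,\bar m]$, but $\int G\Phi_g$ controls no norm of $G$, since $w'(g)=(g^2-1)/g^3$ vanishes at the sonic endpoints and the only available lower bound, $g-1\gtrsim \sin\left(\pi\frac{r-\varepsilon_0}{1-\varepsilon_0}\right)$, gives a weight degenerating linearly there, so the weighted Cauchy--Schwarz step produces the divergent factor $\int w'(g)^{-1}dr$. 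Your proposed remedy, ``the Poisson equation recovers $G$ through $M$ and $\mathcal{E}$'', is circular: $G=M+(r^2\mathcal{E})_r$ requires control of $\mathcal{E}_r$, i.e.\ of $G-M$ itself, whereas the identity only controls $\|\mathcal{E}\|_{L^2}$; likewise $\mathcal{E}(1)$ is not bounded by $\|\mathcal{E}\|_{L^2}$ without again invoking $\|G-M\|_{L^1}$. So smallness of $j/\tau$ alone does not close the absorption, and this is precisely why the paper abandons the direct energy method here and instead uses the normalized test functions $V^{\pm}/(V^{\pm}+h)$ with $V=\mathcal{F}(g_1)-\mathcal{F}(g_2)$ in the spirit of Theorem 10.7 of Gilbarg--Trudinger: a splitting of $[\varepsilon_0,1]$ into the set where $|g_1-g_2|/(V^++h)\le\mathcal{C}$ and its small complement, the Taylor-expansion bound on $(g_1-g_2)^2/(\mathcal{F}(g_1)-\mathcal{F}(g_2))$, the $L^1$ comparison of $\mathcal{F}(m_1)-\mathcal{F}(m_2)$ with $\mathcal{F}(g_1)-\mathcal{F}(g_2)$ valid for $j/\tau$ small, and a final contradiction from the uniform bound on $\left\|\left(\log\left(1+\frac{V^+}{h}\right)\right)_r\right\|_{L^2}$ as $h\to 0$ combined with the Poincar\'e inequality. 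Without this (or some other new idea that copes with the sonic degeneracy), your second case does not go through.
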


\begin{rem}\label{r2}
1.~In this paper, we focus on the well-posedness, ill-posedness and uniqueness of the radial solution to \eqref{eq3} in 3-D hollow ball. In fact, the results are also applicable to 2-D case.

2.~We prove the ill-posedness of solution to \eqref{eq10} by direct mathematical analysis and contradiction method. It is totally different from the result of ill-posedness in \cite[Theorem 2.2]{MMZ2020}. The relaxation time $\tau$ is arbitrarily given constant in this paper. Our result show in great depth that the ill-posedness does not require $\eta_0$ close to $1$ or $\tau=+\infty$.

3.~We prove the uniqueness of solution in two cases. For the case of $\tau=\infty$, namely, the semiconductor effect is zero, we use the simple energy method to prove the uniqueness of solution. For the second case, the main difficulty lies on that the electron is degenerate at the boundary and the non-local terms caused by coupling of electrons and holes. To do this, we apply the method of exponential variation in \cite[Theorem 10.7]{GT2001} and make specific modifications based on the degeneracy of electrons at the boundary.
\end{rem}

This paper is arranged as follows. In Section 2, we show the well-posedness of the solution utilizing the topological degree method when the model \eqref{eq15} satisfying the boundary conditions \eqref{c3}-\eqref{y9}. In Section 3, we apply direct mathemaical analysis and contradiction method to prove the ill-posedness of solution when the doping profile $B(r)$ is sufficiently large and the holes boundary $\eta_0<\bar{\eta}$ for the system \eqref{eq10} with boundary conditions \eqref{eq13}-\eqref{eq14}. In Section 4, we are devoted to prove the uniqueness of solution for the two cases of $\tau=\infty$ and $\frac{j}{\tau}\ll 1$.


\section{The well-posedness of subsonic solution}

Estimating the upper and lower bounds of the holes are difficult due to the couping of the system for electrons and holes. We apply the topological degree method to prove the well-posedness of solution to \eqref{eq15}, which is used in \cite{MMZ2020}. To do this, we first consider the following approximate system
\begin{equation}
	\left\{
	\begin{aligned}\label{yy41}
		&\left[r^2 \left( \frac{1}{g_k}-\frac{k}{g_k^3} \right)(g_k)_r \right]_r+ \left(\frac{r^2}{\tau g_k} \right)_r =g_k-m_k-B(r)+2,\\
		&\left[r^2 \left( \frac{1}{m_k}-\frac{1}{m_k^3} \right)(m_k)_r \right]_r- \left(\frac{r^2}{\tau m_k} \right)_r =m_k+B(r)-g_k+2,\\
		&g_k(\varepsilon _0)=g_k(1)=1,m_k(\varepsilon_0)=\eta _0,m_k(1)=\eta_1,
	\end{aligned}
	\right.
\end{equation}
where $r\in [\varepsilon_0,1]$ and $k\in (0,1)$ is a constant. Here, $\eta_1$ satisfies \eqref{c3} with $g=g_k$ and $m=m_k$.
\begin{lem}\label{le1}
For any $B(r)\in L^\infty(\varepsilon_0,1)$, $\tau>0$, $k\in(0,1)$ and $\underline{m}>1$, there exists a constant $\eta^\ast(\underline{m}, \bar{B}, \tau)>1$ which only depends on $\underline{m}, \bar{B}$ and $\tau$, such that for any $\eta_0\ge \eta^\ast$, the system \eqref{yy41} admits a pair of subsonic solutions $(g,m)\in W^{2,\infty}(\varepsilon_0,1)\times W^{2,\infty}(\varepsilon_0,1)$ and $m\ge \underline{m}+3$ on $[\varepsilon_0,1]$.	
\end{lem}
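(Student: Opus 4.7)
The plan is to apply the Leray--Schauder topological degree, following the framework of \cite{MMZ2020}. The role of the cutoff $k \in (0,1)$ is to de-degenerate the electron equation: at the sonic value $g=1$ the coefficient $1/g - k/g^{3} = 1-k$ is strictly positive, so $\eqref{yy41}_{1}$ is genuinely uniformly elliptic whenever $g_{k}$ stays in a bounded compact subset of $(\sqrt{k},\infty)$. This is what allows us to upgrade the solution of the approximate problem from the weak form of Definition \ref{def1} to $W^{2,\infty}$. I would set up a compact operator $T$ on $C[\varepsilon_{0},1]\times C[\varepsilon_{0},1]$ by freezing the coupling $(g,m)\mapsto(\tilde g,\tilde m)$ through solving two linearized quasilinear Dirichlet problems, and introduce a homotopy parameter $s\in[0,1]$ connecting it to a decoupled reference problem whose degree on a large ball is easily seen to be nonzero.

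The core of the argument is a family of uniform a priori estimates along the homotopy. I would first prove the lower bound $g_{k}\ge 1$ by a maximum principle / comparison argument on $\eqref{yy41}_{1}$, exploiting the fact that the source $g-m-B+2$ is strictly negative as soon as $m_{k}$ is sufficiently large, together with the sonic boundary data $g_{k}(\varepsilon_{0})=g_{k}(1)=1$; an upper bound $g_{k}\le\bar g$ depending only on $\underline m$, $\bar B$ and $\tau$ is obtained similarly. For the hole equation, since $m_{k}(\varepsilon_{0})=\eta_{0}\ge\eta^{\ast}$ and the compatibility condition \eqref{c3} forces $\eta_{1}=m_{k}(1)$ to be comparably large when $\eta_{0}$ is, a comparison argument on $\eqref{yy41}_{2}$ against the explicit constant barrier $\underline m+3$ yields $m_{k}\ge \underline m+3$ on $[\varepsilon_{0},1]$, and an analogous computation gives $m_{k}\le\bar m$. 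Once pointwise bounds strictly inside the ellipticity region are in hand, both equations have uniformly elliptic coefficients with bounded data, so the $W^{2,\infty}$ regularity and higher interior bounds follow from standard elliptic theory.

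The main obstacle is the mutual entanglement of these bounds: the lower bound on $g_{k}$ needs $m_{k}$ not too small, while the lower bound on $m_{k}$ needs $g_{k}$ not too large. I would close the circle by fixing $\eta^{\ast}(\underline m,\bar B,\tau)$ quantitatively so that the barriers $1\le g_{k}\le\bar g$ and $\underline m+3\le m_{k}\le\bar m$ are compatible in a bootstrap sense: assuming one set of barriers, verify by substitution into the right-hand sides of \eqref{yy41} that the sign structure is strong enough for the maximum principle to refuse any violation of the other barrier, then iterate. With uniform bounds established on a ball strictly larger than the a priori estimates, homotopy invariance of the Leray--Schauder degree transfers the nonzero degree from the decoupled reference problem at $s=0$ to the full system at $s=1$, producing a solution $(g_{k},m_{k})\in W^{2,\infty}(\varepsilon_{0},1)\times W^{2,\infty}(\varepsilon_{0},1)$ of \eqref{yy41} with $m_{k}\ge\underline m+3$ on $[\varepsilon_{0},1]$, as claimed.
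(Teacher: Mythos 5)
Your overall framework (a degree/homotopy argument on $C[\varepsilon_0,1]\times C[\varepsilon_0,1]$ with a frozen-coefficient linear operator, closed by a priori bounds and the weak maximum principle, plus standard elliptic regularity for the $k$-regularized electron equation) is the same as the paper's. But the step that actually carries the lemma --- the lower bound $m_k\ge\underline m+3$, which is where the largeness of $\eta^\ast$ must enter --- is not correctly justified. You propose ``a comparison argument on $\eqref{yy41}_2$ against the explicit constant barrier $\underline m+3$,'' using only that $\eta_0$ and $\eta_1$ are large. This fails: the source term of the hole equation is $m+B-g+2+\frac{2r}{\tau m}$, and with only the information $g\ge 1$, $m\le\bar m$, $B\le\bar B$ it can be large and \emph{positive} on the set where $m$ is small; an elliptic equation with positive right-hand side permits arbitrarily deep interior dips of $m$ below its (large) Dirichlet data, so no constant barrier works, and no quantitative version closes either, since the admissible size of the source grows with $\bar m\sim C(\varepsilon_0,\tau)\,\eta_0$. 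Your description of the coupling is also reversed: to run the maximum principle for the lower bound on $m$ you need $g$ to be \emph{large} (at least $\underline m+\bar B+\frac{2}{\tau}+5$) on the bad set $\{m\le\underline m+3\}$, not merely ``not too large''; consequently the bootstrap you sketch, which substitutes the barriers $1\le g\le\bar g$ into the right-hand sides, cannot produce the needed sign.

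The missing idea is the first integral of the \emph{sum} of the two equations. Adding $\eqref{yy47}_1$ and $\eqref{yy47}_2$ and integrating twice (the paper's \eqref{yy49}--\eqref{yy50}, \eqref{yy57}, \eqref{yy63}), one gets a pointwise identity linking $w_k(\hat g)$ and $w(\hat m)$ up to error terms controlled by $\frac{1-\varepsilon_0}{\tau}$, $\ln\varepsilon_0$ and the integration constant $c$; this yields $w_k(\hat g)-\frac{k}{2}\ge w(\eta_0)-w(\hat m)-\frac{4(1-\varepsilon_0)}{\tau}+8\ln\varepsilon_0$, so on $\hat\Omega:=\{\hat m\le\underline m+3\}$ the hypothesis $\eta_0\ge\eta^\ast$, with $\eta^\ast$ defined precisely by \eqref{yy42}, forces $\hat g\ge\underline m+\bar B+\frac{2}{\tau}+5$ there (\eqref{yy69}); only after this does testing $\eqref{yy47}_2$ with $(\hat m-(\underline m+3))^-$ give $m\ge\underline m+3$, and the same identity (with the estimate \eqref{yy56} of $c$) gives the upper bound $\hat m\le\bar m$. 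Relatedly, you treat $\eta_1=m_k(1)$ as ``comparably large'' via \eqref{c3}, but it depends on the unknown solution and must be built into the operator itself (the map $(\hat g,\hat m)\mapsto\hat\eta_1$ in \eqref{yy45}, which requires first fixing $\bar m$ through \eqref{yy44}); without this, the constant of integration $c$ cannot be estimated and the $m$-bounds above do not go through. As written, your proof establishes the bounds on $g_k$ but not the claimed bound $m_k\ge\underline m+3$, so the argument has a genuine gap at the central point of the lemma.
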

\begin{proof}
Define
\begin{align*}
	&U=\left \{(g,m)\in C[\varepsilon _0,1]\times C[\varepsilon _0,1]\right \},\\
	&V=\left \{(g,m)\in X,\lambda <g<\Lambda,l<m<L\right \},	
\end{align*}
where
\begin{align*}
	\lambda =\frac{\sqrt{k}+1}{2},~\Lambda =\bar{m}+\bar{B}+\frac{2}{\tau}+3,~l=\underline{m}+2,~L=\bar{m}+2,
\end{align*}
and the value of $\bar{m}$ will be given later. $V$ is a bounded and open subset of $U$, and we have
\begin{align*}
	\partial V=\{(g,m)\in U,&\lambda \le g\le\Lambda,l\le m\le L,~\mbox{and}~\exists~x\in[\varepsilon _0,1],\\ 
	&s.t:g(x)=\lambda~\mbox{or}~g(x)=\Lambda~\mbox{or}~m(x)=l ~\mbox{or}~m(x)=L\}.	
\end{align*} 

For given $\tau>0$, $\bar{B}\ge 0$ and $\underline{m}>1$, there exists a unique $\eta^\ast>1$ such that 
\begin{align}\label{yy42}
	w(\eta^\ast)-w(\underline{m}+3)=\frac{4(1-\varepsilon_0)}{\tau}-8{\rm ln}\varepsilon_0+w_k(\underline{m} +\bar{B}+\frac{2}{\tau}+5)-\frac{k}{2},
\end{align}
where the functions $w(h):={\rm ln}h+\frac{1}{2h^2}$ and $w_k(h):={\rm ln}h+\frac{k}{2h^2}$. We will prove there exists a pair of subsonic solutions to equation \eqref{yy41} when $\eta_0\ge\eta^\ast$. For any $\eta_0\ge\eta^\ast$, it yields
\begin{align}\label{yy43}
	w(\eta_0)-w(\underline{m}+3)\ge \frac{4(1-\varepsilon_0)}{\tau}-8{\rm ln}\varepsilon_0+w_k(\underline{m} +\bar{B}+\frac{2}{\tau}+5)-\frac{k}{2}.	
\end{align}
Now we take $\bar{m}>\eta_0$ satisfying
\begin{align}\label{yy44}
	w(\bar{m})-w(\eta_0)\ge \frac{4(1-\varepsilon_0)}{\tau}-12{\rm ln}\varepsilon_0\ge -4{\rm ln}\varepsilon_0-\frac{1}{\tau}\int_{\varepsilon_0}^{1}\left(\frac{1}{\hat{g}}-\frac{1}{\hat{m}}\right)dr,
\end{align}
for any $(\hat{g},\hat{m})\in \bar{V}$. There exists a unique $\hat{\eta}_1\in[\underline{m}+3,\bar{m}]$ such that
\begin{align}\label{yy45}
	w(\hat{\eta}_1)-w(\eta_0)=-4{\rm ln}\varepsilon_0-\frac{1}{\tau}\int_{\varepsilon_0}^{1}\left(\frac{1}{\hat{g}}-\frac{1}{\hat{m}}\right)dr.
\end{align}

Now define the operator $\Gamma:\bar{V}\to U$, that is, $(\hat{g},\hat{m})\mapsto (g_k,m_k)$ by solving the following linearized equation
\begin{equation}
	\left\{
	\begin{aligned}\label{yy46}
		&\left[r^2 \left( \frac{1}{\hat{g}}-\frac{k}{\hat{g}^3} \right) (g_k)_r \right]_r-\frac{r^2}{\tau \hat{g}^2}( g_k )_r=\hat{g}-\hat{m}-B(r)+2-\frac{2r}{\tau \hat{g}},\\
		&\left[r^2 \left( \frac{1}{\hat{m}}-\frac{1}{\hat{m}^3} \right) (m_k)_r \right]_r+\frac{r^2}{\tau \hat{m}^2} ( m_k )_r=\hat{m}+B(r)-\hat{g}+2+\frac{2r}{\tau \hat{m}},\\
		&g_k(\varepsilon _0)=g_k(1)=1,m_k(\varepsilon_0)=\eta _0,m_k(1)=\hat{\eta}_1,
	\end{aligned}
	\right.
\end{equation}
where $\hat{\eta}_1$ is defined in \eqref{yy45}. According to the theory of linear elliptic equations, $\Gamma:\bar{V}\to U$ is a compact and continuous operator. Let $\hat{\varphi}=\left (\hat{g},\hat{m}\right)$ and $\varphi=\left (g_k,m_k\right)=\Gamma \hat{\varphi}$.

Set $\Phi: \bar{V}\times [0,1]\to U$ is binary compact and continuous operator with $\Phi (\hat{\varphi},\gamma)=\hat{\varphi}-\gamma \Gamma \hat{\varphi}=\hat{\varphi}-\gamma\varphi$. Obviously, if $\Phi(\hat{\varphi},1)=0$, then $\hat{\varphi}$ is a solution of \eqref{yy41}. We take $q=(1,\eta_0)\in V$ and let $p(\gamma)=(1-\gamma)q$, where $\gamma \in [0,1]$. If $p(\gamma)\notin \Phi (\partial V,\gamma)$ for any $\gamma \in [0,1]$, then according to the topological degree theory, it holds
\begin{align*}
	deg\left ( \Phi(\cdot,1),V,0\right )=deg\left ( \Phi(\cdot,0),V,q\right )=deg\left ( id,V,q\right )=1,	
\end{align*}  
thus $\Phi(\hat{\varphi},1)=0$ gives a solution $\hat{\varphi} \in V$.

Next, we only need to prove that $p(\gamma)\notin \Phi (\partial V,\gamma)$ for any $\gamma \in [0,1]$. Assume there exists $\gamma \in [0,1]$ and $\hat{\varphi}\in\partial V$ such that
\begin{align*}
	p(\gamma)=(1-\gamma)q=\Phi (\hat{\varphi},\gamma)=\hat{\varphi}-\gamma \varphi.	
\end{align*}
We will show a contradition. If $\gamma=0$, we have $\hat{\varphi}=q$, which is impossible due to $q\in V, \hat{\varphi}\in \partial V$ and $V$ is an open subset of $U$. If $\gamma\in (0,1]$, we have
\begin{align*}
	\varphi=\frac{1}{\gamma}\hat{\varphi}-\frac{1-\gamma}{\gamma}q,
\end{align*}
that is, $(g_k)_r=\frac{1}{\gamma}\hat{g}_r$, $(m_k)_r=\frac{1}{\gamma}\hat{m}_r$, then by \eqref{yy46} we derive
\begin{equation}
	\left\{
	\begin{aligned}\label{yy47}
		&\left[r^2 \left( \frac{1}{\hat{g}}-\frac{k}{\hat{g}^3} \right)\hat{g}_r \right]_r-\frac{r^2}{\tau \hat{g}^2}\hat{g}_r=\gamma \left( \hat{g}-\hat{m}-B(r)+2-\frac{2r}{\tau \hat{g}} \right),\\
		&\left[r^2 \left( \frac{1}{\hat{m}}-\frac{1}{\hat{m}^3} \right)\hat{m}_r \right]_r+\frac{r^2}{\tau \hat{m}^2}\hat{m}_r=\gamma \left( \hat{m}+B(r)-\hat{g}+2+\frac{2r}{\tau \hat{m}} \right),\\
		&\hat{g}(\varepsilon _0)=\hat{g}(1)=1,\hat{m}(\varepsilon_0)=\eta _0,\hat{m}(1)=\eta_0+\gamma \left (\hat{\eta}_1-\eta_0\right)=:\tilde{\eta}_1,
	\end{aligned}
	\right.
\end{equation}
where $\tilde{\eta}_1$ is between $\eta_0$ and $\hat{\eta}_1$. Next, we prove that $\lambda<\hat{g}<\Lambda$ and $l<\hat{m}<L$.

First, the upper and lower bounds of $\hat{g}$ are to be estimated. Taking respectively $\omega_1=(\hat{g}-1)^-$ and $\omega_2=\left(\hat{g}-\left(\bar{m}+\bar{B}+\frac{2}{\tau}+2\right)\right)^+$ as the test function to $\eqref{yy47}_1$, and utilizing the standard weak maximum principle, we can obtain that $\omega_1\le 0$ and $\omega_2 \le 0$, which gives 
\begin{align}\label{yy48}
	\lambda<1\le \hat{g}\le \bar{m}+\bar{B}+\frac{2}{\tau}+2<\Lambda.	
\end{align}
Here and after $(h)^-:=-{\rm min}\left\{h,0\right\}$, $(h)^+:={\rm max}\left\{h,0\right\}$. Next, we will carefully estimate the upper and lower bounds of $\hat{m}$. From \eqref{yy47}, we have
\begin{equation}
	\left\{
	\begin{aligned}\label{yy49}
		&\left [r^2\left (\frac{1}{\hat{g}}-\frac{k}{\hat{g}^3}\right)\hat{g}_r+\frac{r^2}{\tau \hat{g}}-2r\right]_r=\gamma (\hat{g}-\hat{m}-B(r))+(1-\gamma)\frac{2r}{\tau \hat{g}}+2(\gamma-1),\\
		&\left [r^2\left (\frac{1}{\hat{m}}-\frac{1}{\hat{m}^3}\right)\hat{m}_r-\frac{r^2}{\tau \hat{m}}-2r\right]_r=\gamma (\hat{m}+B(r)-\hat{g})-(1-\gamma)\frac{2r}{\tau \hat{m}}+2(\gamma-1).
	\end{aligned}
	\right.
\end{equation}
Adding $\eqref{yy49}_1$ to $\eqref{yy49}_2$, it yields
\begin{align*}
	&\left [r^2\left (\frac{1}{\hat{g}}-\frac{k}{\hat{g}^3}\right)\hat{g}_r+r^2\left (\frac{1}{\hat{m}}-\frac{1}{\hat{m}^3}\right)\hat{m}_r+\frac{r^2}{\tau \hat{g}}-\frac{r^2}{\tau \hat{m}}-4r\right]_r\nonumber\\
	=&2(1-\gamma)r\cdot\frac{1}{\tau}\left (\frac{1}{\hat{g}}-\frac{1}{\hat{m}}\right)+4(\gamma-1).	
\end{align*}
Integrating the above equation on $\left(\varepsilon _0,r\right)$ and dividing by $r^2$, then we obtain
\begin{align}\label{yy50}
	&\left (\frac{1}{\hat{g}}-\frac{k}{\hat{g}^3}\right)\hat{g}_r+\left (\frac{1}{\hat{m}}-\frac{1}{\hat{m}^3}\right)\hat{m}_r+\frac{1}{\tau \hat{g}}-\frac{1}{\tau \hat{m}}-\frac{4}{r}\nonumber\\
	=&\frac{c}{r^2}+\frac{1}{r^2} \int_{\varepsilon _0}^{r}\left [2(1-\gamma)s\cdot\frac{1}{\tau}\left (\frac{1}{\hat{g}(s)}-\frac{1}{\hat{m}(s)}\right)+4(\gamma-1)\right]ds.
\end{align}
Once again, integrating the above equation on $\left [\varepsilon _0,1\right ]$, by \eqref{yy45} we get 
\begin{align}\label{yy51}
	&c\int_{\varepsilon_0}^{1}\frac{1}{r^2}dr\nonumber\\
	=&[w(\tilde{\eta}_1)-w(\hat{\eta}_1)]-\int_{\varepsilon_0}^{1}\frac{1}{r^2} \int_{\varepsilon _0}^{r}\left [2(1-\gamma)s\cdot\frac{1}{\tau}\left (\frac{1}{\hat{g}(s)}-\frac{1}{\hat{m}(s)}\right)\right]dsdr-\int_{\varepsilon_0}^{1}\frac{1}{r^2}4(\gamma-1)dsdr\nonumber\\	
	=&:A_1+A_2+A_3.
\end{align}
It is easy to calculate that
\begin{align}\label{yy52}
	c\int_{\varepsilon_0}^{1}\frac{1}{r^2}dr=c \cdot \frac{1-\varepsilon _0}{\varepsilon _0}   	
\end{align}
and
\begin{align}\label{yy53}
	\left|A_1\right|&=\left|w(\tilde{\eta}_1)-w(\eta_0)-\left(w(\hat{\eta}_1)-w(\eta_0)\right)\right|\nonumber\\
	&\le \left|w(\hat{\eta}_1)-w(\eta_0)\right|\nonumber\\
	&=\left|\frac{1}{\tau}\int_{\varepsilon _0}^{1}\frac{1}{\hat{g}}-\frac{1}{\hat{m}}dr+4{\rm ln}\varepsilon_0\right|\nonumber\\
	&\le \frac{1-\varepsilon_0}{\tau}-4{\rm ln}\varepsilon _0,	
\end{align}
due to $\tilde{\eta}_1$ is between $\eta_0$ and $\hat{\eta}_1$. Also, we have
\begin{align}\label{yy54}
	\left|A_2\right|\le&\frac{1}{\tau}\int_{\varepsilon _0}^{1}\frac{1}{r^2}\cdot \left|\int_{\varepsilon_0}^{r}2(1-\gamma)s\left(\frac{1}{\hat{g}}-\frac{1}{\hat{m}}\right)ds\right|dr\nonumber\\
	=&\frac{1-\gamma}{\tau}\int_{\varepsilon_0}^{1}\frac{1}{r^2}(r^2-\varepsilon_0^2)dr\le\frac{1-\varepsilon_0}{\tau}
\end{align}
and
\begin{align}\label{yy55}
	\left|A_3\right|\le\int_{\varepsilon _0}^{1}\frac{1}{r^2}\int_{\varepsilon_0}^{r}4dsdr=4\int_{\varepsilon _0}^{1}\frac{1}{r^2}(r-\varepsilon _0)dr\le4\int_{\varepsilon _0}^{1}\frac{1}{r}dr=-4{\rm ln}\varepsilon _0.
\end{align}
Combining \eqref{yy51}-\eqref{yy55}, we get
\begin{align}\label{yy56}
	\left|c\right|\le&\frac{\varepsilon _0}{1-\varepsilon _0}\left|A_1\right|+\frac{\varepsilon _0}{1-\varepsilon _0}\left|A_2\right|+\frac{\varepsilon _0}{1-\varepsilon _0}\left|A_3\right|\nonumber\\
	\le&\frac{\varepsilon _0}{1-\varepsilon _0}\cdot\left(\frac{1-\varepsilon _0}{\tau}-4{\rm ln}\varepsilon _0\right)+\frac{\varepsilon _0}{1-\varepsilon _0}\cdot\frac{1-\varepsilon _0}{\tau}-\frac{\varepsilon _0}{1-\varepsilon _0}\cdot 4{\rm ln}\varepsilon _0\nonumber\\
	=&\frac{2\varepsilon _0}{\tau}-\frac{8\varepsilon _0}{1-\varepsilon _0}{\rm ln}\varepsilon _0.
\end{align}

Now we are ready to estimate the upper bound of $\hat{m}$. Integrating \eqref{yy50} on $(\varepsilon_0,r)$, we have
\begin{align}\label{yy57}
	&w_k(\hat{g})-\frac{k}{2}+w(\hat{m})-w(\eta_0)+\frac{1}{\tau}\int_{\varepsilon_0}^{r}\frac{1}{\hat{g}(\xi)}-\frac{1}{\hat{m}(\xi)}d\xi-4{\rm ln}r+4{\rm ln}\varepsilon_0\nonumber\\
	=&c\left(\frac{1}{\varepsilon_0}-\frac{1}{r}\right)+\int_{\varepsilon_0}^{r}\frac{1}{\xi^2}\int_{\varepsilon_0}^{\xi}2(1-\gamma)s\cdot\frac{1}{\tau}\left(\frac{1}{\hat{g}(s)}-\frac{1}{\hat{m}(s)}\right)dsd\xi+\int_{\varepsilon_0}^{r}\frac{1}{\xi^2}\int_{\varepsilon_0}^{\xi}4(\gamma-1)dsd\xi,	
\end{align} 
and then
\begin{align}\label{yy58}
	w(\hat{m})-w(\eta_0)=&\frac{k}{2}-w_k(\hat{g})-\frac{1}{\tau}\int_{\varepsilon_0}^{r}\left(\frac{1}{\hat{g}(\xi)}-\frac{1}{\hat{m}(\xi)}\right)d\xi+4{\rm ln}r-4{\rm ln}\varepsilon_0+c\left(\frac{1}{\varepsilon_0}-\frac{1}{r}\right)\nonumber\\ &+\int_{\varepsilon_0}^{r}\frac{1}{\xi^2}\int_{\varepsilon_0}^{\xi}2(1-\gamma)s\cdot \frac{1}{\tau}\left(\frac{1}{\hat{g}(s)}-\frac{1}{\hat{m}(s)}\right)dsd\xi+\int_{\varepsilon_0}^{r}\frac{1}{\xi^2}\int_{\varepsilon_0}^{\xi}4(\gamma-1)dsd\xi. 	
\end{align}
Through a series of simple calculations, it yields
\begin{align}\label{yy59}
	\left|\frac{1}{\tau}\int_{\varepsilon_0}^{r}\frac{1}{\hat{g}(\xi)}-\frac{1}{\hat{m}(\xi)}d\xi\right|\le\frac{1-\varepsilon_0}{\tau},
\end{align}

\begin{align}\label{yy60}
	\left|\int_{\varepsilon_0}^{r}\frac{1}{\xi^2}\int_{\varepsilon_0}^{\xi}2(1-\gamma)s\cdot \frac{1}{\tau}\left(\frac{1}{\hat{g}(s)}-\frac{1}{\hat{m}(s)}\right)dsd\xi\right| \le&\frac{1-\gamma}{\tau}\int_{\varepsilon_0}^{r}\frac{1}{\xi^2}\int_{\varepsilon_0}^{\xi}2s~dsd\xi\nonumber\\
	=&\frac{1-\gamma}{\tau}\int_{\varepsilon_0}^{r}\frac{1}{\xi^2}(\xi^2-\varepsilon _0^2)d\xi\nonumber\\
	\le&\frac{1-\varepsilon_0}{\tau}
\end{align}
and
\begin{align}\label{yy61}
	\int_{\varepsilon_0}^{r}\frac{1}{\xi^2}\int_{\varepsilon_0}^{\xi}4(\gamma-1)dsd\xi \le 0.	
\end{align}
Notice that $w_k(\hat{g})\ge \frac{k}{2}$ due to $\hat{g}\ge 1$ on $[\varepsilon_0,1]$. Substituting \eqref{yy56} and \eqref{yy59}-\eqref{yy61} into \eqref{yy58}, we derive
\begin{align}\label{yy62}
	w(\hat{m})-w(\eta_0)\le&\frac{1-\varepsilon_0}{\tau}-4{\rm ln}\varepsilon_0+\left(\frac{2\varepsilon_0}{\tau}-\frac{8\varepsilon_0}{1-\varepsilon_0}{\rm ln}\varepsilon_0\right)\left(\frac{1-\varepsilon_0}{\varepsilon_0}\right)+\frac{1-\varepsilon_0}{\tau}\nonumber\\
	\le&\frac{4(1-\varepsilon_0)}{\tau}-12{\rm ln}\varepsilon_0.	
\end{align}
So we obtain that $\hat{m}\le \bar{m}<L$ by \eqref{yy44} and \eqref{yy62}.

Next, we estimate the lower bound of $\hat{m}$. From \eqref{yy57}, we have
\begin{align}\label{yy63}
	w_k(\hat{g})-\frac{k}{2}=&w(\eta_0)-w(\hat{m})-\frac{1}{\tau}\int_{\varepsilon_0}^{r}\left(\frac{1}{\hat{g}(\xi)}-\frac{1}{\hat{m}(\xi)}\right)d\xi+4{\rm ln}r-4{\rm ln}\varepsilon_0+c\left(\frac{1}{\varepsilon_0}-\frac{1}{r}\right)\nonumber\\
	&+\int_{\varepsilon_0}^{r}\frac{1}{\xi^2}\int_{\varepsilon_0}^{\xi}2(1-\gamma)s\cdot \frac{1}{\tau}\left(\frac{1}{\hat{g}(s)}-\frac{1}{\hat{m}(s)}\right)dsd\xi+\int_{\varepsilon_0}^{r}\frac{1}{\xi^2}\int_{\varepsilon_0}^{\xi}4(\gamma-1)dsd\xi.
\end{align}
Since
\begin{align}\label{yy64}
	\left|\int_{\varepsilon_0}^{r}\frac{1}{\xi^2}\int_{\varepsilon_0}^{\xi}4(\gamma-1)dsd\xi\right| \le&\int_{\varepsilon_0}^{r}\frac{1}{\xi^2}\int_{\varepsilon_0}^{\xi}4~dsd\xi\nonumber\\
	=&\int_{\varepsilon_0}^{r}\frac{1}{\xi^2}4(\xi-\varepsilon _0)d\xi\nonumber\\
	\le&4\int_{\varepsilon_0}^{r}\frac{1}{\xi}d\xi\nonumber\\
	=&4{\rm ln}r-4{\rm ln}\varepsilon_0.	
\end{align}
Thus, plugging \eqref{yy56}, \eqref{yy59}-\eqref{yy60} and \eqref{yy64} into \eqref{yy63} gives
\begin{align}\label{yy65}
	&w_k(\hat{g})-\frac{k}{2}\nonumber\\
	\ge &w(\eta_0)-w(\hat{m})-\frac{1-\varepsilon _0}{\tau}-\left(\frac{2\varepsilon_0}{\tau}-\frac{8\varepsilon_0}{1-\varepsilon_0}{\rm ln}\varepsilon_0\right ) \left(\frac{1-\varepsilon_0}{\varepsilon_0}\right)-\frac{1-\varepsilon _0}{\tau}\nonumber\\
	\ge &w(\eta_0)-w(\hat{m})-\frac{4(1-\varepsilon _0)}{\tau}+8{\rm ln}\varepsilon_0.	
\end{align}
Define $\hat{\Omega}:=\left\{r\in [\varepsilon_0,1],\hat{m}\le \underline{m}+3\right\}$. Since $\eta_0\ge \eta^\ast$, we have 
\begin{align}\label{yy66}
	w(\eta_0)-w(\hat{m})\ge w(\eta^\ast)-w(\underline{m}+3)\,\,\,\,\,\,\mbox{in}\,\,\,\hat{\Omega}. 
\end{align}
Thus
\begin{align}\label{yy67}
	w_k(\hat{g})-\frac{k}{2}&\ge w(\eta^\ast)-w(\underline{m}+3)-\frac{4(1-\varepsilon _0)}{\tau}+8{\rm ln}\varepsilon_0\,\,\,\,\,\,\mbox{in}\,\,\,\hat{\Omega}. 	
\end{align}
By \eqref{yy42} and \eqref{yy67}, we obtain
\begin{align}\label{yy68}
	w_k(\hat{g})-\frac{k}{2}\ge w_k(\underline{m}+\bar{B}+\frac{2}{\tau}+5)-\frac{k}{2}\,\,\,\,\,\,\mbox{in}\,\,\,\hat{\Omega}. 	
\end{align}
Thus it can be deduced
\begin{align}\label{yy69}
	\hat{g}\ge \underline{m}+\bar{B}+\frac{2}{\tau}+5\,\,\,\,\,\,\mbox{in}\,\,\,\hat{\Omega}.
\end{align}
To prove $\hat{m}\ge \underline{m}+3>l$, we take $(\hat{m}-(\underline{m}+3))^-$ as the test function. Multiplying $(\hat{m}-(\underline{m}+3))^-$ for $\eqref{yy47}_2$ and integrating it by parts on $[\varepsilon_0,1]$, we obtain
\begin{align}\label{yy70}
	&\int_{\varepsilon_0}^{1}r^2\left(\frac{1}{\hat{m}}-\frac{1}{\hat{m}^3}\right)\left|((\hat{m}-(\underline{m}+3))^-)_r\right|^2dr-\int_{\varepsilon_0}^{1}\frac{r^2}{\tau \hat{m}^2}(\hat{m}-(\underline{m}+3))^-\cdot (\hat{m}-(\underline{m}+3))_r^-dr\nonumber\\
	=&\gamma \int_{\varepsilon_0}^{1}\left(\hat{m}+B(r)-\hat{g}+2+\frac{2r}{\tau \hat{m}}\right) \cdot (\hat{m}-(\underline{m}+3))^- dr\nonumber\\
	=&\gamma \int_{\hat{\Omega}}\left(\hat{m}+B(r)-\hat{g}+2+\frac{2r}{\tau \hat{m}}\right) \cdot (\hat{m}-(\underline{m}+3))^- dr\nonumber\\
	\le&\gamma\int_{\hat{\Omega}}\left(\hat{m}+B(r)-\left(\underline{m}+\bar{B}+\frac{2}{\tau}+5\right)+2+\frac{2r}{\tau \hat{m}}\right)\cdot(\hat{m}-(\underline{m}+3))^- dr\nonumber\\
	\le&0.	
\end{align}
Owing to $\frac{1}{\hat{m}}-\frac{1}{\hat{m}^3}>0$ and $\left\|\frac{r^2}{\tau\hat{m}^2}\right\|_{L^\infty[\varepsilon_0,1]}$ is bounded, using the weak maximum principle in \cite[Theorem 8.1]{GT2001}, we ultimately obtain that $(\hat{m}-(\underline{m}+3))^-\equiv 0$, that is, $\hat{m}\ge \underline{m}+3>l$.

In summary, we have proved that $\lambda<\hat{g}<\Lambda$ and $l<\hat{m}<L$ for $\eta_0\ge \eta^\ast$, which is contradictory with $(\hat{g},\hat{m})\in \partial V$. Utilizing the standard regularity theory and the discussions above, we can infer to $(g_k,m_k)\in W^{2,\infty}(\varepsilon_0,1)\times W^{2,\infty}(\varepsilon_0,1)$ and $m\ge \underline{m}+3$.

Finally, we apply the same method as in \cite[Lemma 2.3]{LMZZ2017} to obtain
\begin{align}\label{yy71}
	g_k(r)\ge 1+\bar{\nu} sin(\pi \cdot\frac{r-\varepsilon_0}{1-\varepsilon_0})>1,	
\end{align}
where $\bar{\nu}>0$ is a small constant and independent of $k$.

This completes the proof of Lemma \ref{le1}.
\end{proof}
Now, we are ready to prove Theorem \ref{th1}.\\
{\bf The proof of Theorem \ref{th1}}.~~We apply the compactness method as in \cite{LMZZ2017} to get the solution of \eqref{eq15}. Assume that $(g_k,m_k)$ is the solution of \eqref{yy41}. Multiplying $(g_k-1)$ for the equation $\eqref{yy41}_1$ and integrating it by parts on $[\varepsilon_0,1]$, we have
\begin{align*}
	&(1-k^2)\int_{\varepsilon_0}^{1}r^2\frac{|(g_k)_r|^2}{(g_k)^3}dr+\frac{4}{9}\int_{\varepsilon_0}^{1}r^2\frac{g_k+1}{(g_k)^3}|((g_k-1)^\frac{3}{2})_r|^2dr\\
	&\,+\frac{1}{\tau}\int_{\varepsilon_0}^{1}\frac{r^2(g_k)_r}{g_k}dr+\int_{\varepsilon_0}^{1}(g_k-m_k-B+2)(g_k-1)dr=0.	
\end{align*} 
Applying the standard energy estimate and the uniform boundedness of $g_k$ and $m_k$ in $L^\infty(\varepsilon_0,1)$, we derive
\begin{align*}
	\|((g_k-1)^\frac{3}{2})_r\|_{L^2[\varepsilon_0,1]}\le C,	
\end{align*}
here and below $C$ denotes constant independent of $k$. Owing to $((g_k-1)^2)_r=\frac{4}{3}(g_k-1)^\frac{1}{2}((g_k-1)^\frac{3}{2})_r $, according to the boundedness of $g_k$, we have 
\begin{align}\label{yy72}
	\|(g_k-1)^2\|_{H_0^1[\varepsilon_0,1]}\le C.	
\end{align}

Now, multiplying $\eqref{yy41}_2$ by $(m_k-m_{\theta_k})$, where $m_{\theta_k}(r)=\eta_0+r(m_k(1)-\eta_0)$, and integrating it by parts on $[\varepsilon_0,1]$, we arrive
\begin{align*}
	\int_{\varepsilon_0}^{1}r^2\frac{m_k^2-1}{m_k^3}|(m_k)_r|^2dr=&\int_{\varepsilon_0}^{1}r^2\frac{m_k^2-1}{m_k^3}(m_k)_r(m_{\theta_k})_rdr+\frac{1}{\tau}\int_{\varepsilon_0}^{1}\frac{1}{m_k}(m_k-m_{\theta_k})_rdr\\
	&\,-\int_{\varepsilon_0}^{1}(m_k+B-g_k+2)(m_k-m_{\theta_k})dr.	
\end{align*}
Based on the boundedness of $g_k$ and $m_k$ again, it can be obtained that $\|(m_k)_r\|_{L^2[\varepsilon_0,1]}\le C$, and then $\|m_k\|_{H^1[\varepsilon_0,1]}\le C$. Consequently, we get 
\begin{align}\label{yy73}
	&(g_k-1)^2\rightharpoonup (g-1)^2~~\mbox{weakly~in}~~H_0^1[\varepsilon_0,1]~~\mbox{for}~~k\to 1,\\
	&m_k\rightharpoonup m~~\mbox{weakly~in}~~H^1[\varepsilon_0,1]~~\mbox{for}~~k\to 1	
\end{align}
and
\begin{align}\label{yy74}
	||(g-1)^2||_{H_0^1[\varepsilon_0,1]}\le C,~~||m||_{H^1[\varepsilon_0,1]}\le C.	
\end{align}

Using the similar method as in \cite{LMZZ2017}, we can prove that $(g,m)\in C^{\frac{1}{2}}[\varepsilon_0,1]\times W^{2,\infty}(\varepsilon_0,1)$ and $(g,m)$ is the weak solution of the system \eqref{eq15}. At the same time, since $H^1(\varepsilon_0,1)$ is the compact embedding of $C[\varepsilon_0,1]$, so we obtain that $g>1$ in $(\varepsilon_0,1)$, $m\ge \underline{m}+3$ in $[\varepsilon_0,1]$ and the conditions \eqref{c3}-\eqref{y9} all hold by \eqref{yy71} and $m_k\ge \underline{m}+3$. Therefore, we complete the proof of Theorem \ref{th1}.\hfill $\Box$ 


\section{The ill-posedness of subsonic solution}

In this section, we prove the ill-posedness of subsonic solution to \eqref{eq10}.\\
{\bf The proof of Theorem \ref{th2}}.~~According to the model \eqref{eq10}, it yields
\begin{align}\label{eq18}
\left(\frac{1}{m}-\frac{1}{m^3}\right)m_r=-\left(\frac{1}{g}-\frac{1}{g^3}\right)g_r-\frac{1}{\tau}\left(\frac{1}{g}-\frac{1}{m}\right)+\frac{4}{r}.	
\end{align}
Integrating \eqref{eq18} on $(\varepsilon_0,r)$, we derive
\begin{align}\label{eq19}
w(m)-w(\eta _0)=-w(g)+\frac{1}{2}-\frac{1}{\tau}\int_{\varepsilon_0}^{r}\left(\frac{1}{g}-\frac{1}{m}\right)dr+4{\rm ln}r-4{\rm ln}\varepsilon _0.		
\end{align}
Due to $\left|\frac{1}{\tau}\int_{\varepsilon_0}^{r}\left(\frac{1}{g}-\frac{1}{m}\right)dr\right|\le \frac{1}{\tau}$, it holds
\begin{align}\label{eq20}
w(m)-w(\eta _0)\le -w(g)+\frac{1}{2}+\frac{1}{\tau}-4{\rm ln}\varepsilon _0.
\end{align}
Thus for any fixed $\bar{\eta}>1$, it should hold for all $\eta_0<\bar{\eta}$
\begin{align*}
w(m)-w(\eta_0)>w(1)-w(\bar{\eta})>-w(\bar{\eta}).
\end{align*}
Hence, it is true 
\begin{align}\label{g1}
-w(g)+\frac{1}{2}+\frac{1}{\tau}-4{\rm ln}\varepsilon_0>-w(\bar{\eta}),	
\end{align}
which gives a restriction on the upper bound of $w(g)$, and thus a restriction on the upper bound of $g$. That is, $w(g)<w(\bar{\eta})+\frac{1}{2}+\frac{1}{\tau}-4{\rm ln}\varepsilon_0$. This gives the idea of the proof. Let $\bar{g}=\bar{g}(\bar{\eta})>1$ such that $w(\bar{g})=w(\bar{\eta})+\frac{1}{2}+\frac{1}{\tau}-4{\rm ln}\varepsilon_0$. So it holds that $g<\bar{g}$, and then $w(g)<w(\bar{g})$, for any $r\in[\varepsilon_0,1]$. 

Substituting $w(h)={\rm ln} h+\frac{1}{2h^2}$ into the model $\eqref{eq15}_1$, it becomes
\begin{equation}
	\left\{
	\begin{aligned}\label{equa4}
		&\left[r^2w_r(g(r))\right]_r+\left(\frac{r^2}{\tau g}\right)_r=g-m-B(r)+2,\\
		&w(g(\varepsilon _0))=w(g(1))=\frac{1}{2}.
	\end{aligned}
	\right.
\end{equation}

Assume that $p$ is the maximum point of $w(g(r))$ and $p$ lies in the right half interval of $[\varepsilon_0, 1]$ without loss of generality, that is, $p\in \left(\varepsilon_0+\frac{1-\varepsilon_0}{2}, 1\right)$. Taking any point $q\in \left(\varepsilon_0,\varepsilon_0+\frac{1-\varepsilon_0}{4}\right)$, we have $p-q>\frac{1-\varepsilon_0}{4}$ and $w_r(g(p))=0$. Due to $w\in C^{1,\frac{1}{2}}[\varepsilon_0,1]\cap W^{2,1}[\varepsilon_0,1]$ (see \cite{LMZZ2017}), then for any $q\in \left(\varepsilon_0,\varepsilon_0+\frac{1-\varepsilon_0}{4}\right)$, we integrate $\eqref{equa4}_1$ on $(q,p)$ to have
\begin{align}\label{eq22}
w_r(g(q))&=-\frac{1}{q^2}\int_{q}^{p}(g-m-B(s)+2)ds+\frac{1}{\tau q^2}\left(\frac{p^2}{g(p)}-\frac{q^2}{g(q)}\right)\nonumber\\
&\ge \frac{1}{q^2}\int_{q}^{p}(B(s)+m-g-2)ds-\frac{1}{\tau}\nonumber\\
&\ge \frac{1}{q^2}\int_{q}^{p}(B(s)-(\bar{g}+2))ds-\frac{1}{\tau}\nonumber\\
&\ge \frac{1}{q^2}\left(\int_{q}^{p}B(s)ds-(\bar{g}+2)\right)-\frac{1}{\tau}\nonumber\\
&\ge \frac{1}{q^2}\left(\frac{1-\varepsilon_0}{4}B^\ast-(\bar{g}+2)\right)-\frac{1}{\tau},
\end{align}
where $\underset{r\in(\alpha,\beta)}{{\rm inf}}B=B^\ast$, and $\alpha=\varepsilon_0+\frac{1-\varepsilon_0}{4},\beta=\varepsilon_0+\frac{3(1-\varepsilon_0)}{4}$. Based on this, we have
\begin{align}\label{eq23}
&w(g(\varepsilon_0+\frac{1-\varepsilon_0}{4}))-w(g(\varepsilon_0))\nonumber\\
=&\int_{\varepsilon_0}^{\varepsilon_0+\frac{1-\varepsilon_0}{4}}w_r(g(s))ds\nonumber\\
\ge& \int_{\varepsilon_0}^{\varepsilon_0+\frac{1-\varepsilon_0}{4}}\left[\frac{1}{s^2}\left(\frac{1-\varepsilon_0}{4}B^\ast-(c+2)\right)-\frac{1}{\tau}\right]ds\nonumber\\
=&\frac{1-\varepsilon_0}{(3\varepsilon_0+1)\varepsilon_0}\left(\frac{1-\varepsilon_0}{4}B^\ast-(\bar{g}+2)\right)-\frac{1-\varepsilon_0}{4\tau}.
\end{align}
Taking 
\begin{align}\label{g4}
B^\ast=B^\ast(\bar{\eta}):=\frac{(3\varepsilon_0+1)4\varepsilon_0}{(1-\varepsilon_0)^2}w(\bar{g})+\frac{(3\varepsilon_0+1)\varepsilon_0}{(1-\varepsilon_0)\tau}+\frac{4(\bar{g}+2)}{1-\varepsilon_0}.
\end{align}
Then for $\underset{r\in(\alpha,\beta)}{{\rm inf}}B\ge B^\ast(\bar{\eta})$, we derive
\begin{align}\label{ww1}
w(g(\varepsilon_0+\frac{1-\varepsilon_0}{4}))-w(g(\varepsilon_0))\ge w(\bar{g}),
\end{align}
which gives a contradiction with $g<\bar{g}(\bar{\eta})$ and $w(g)<w(\bar{g})$. This completes the proof of Theorem \ref{th2}.\hfill $\Box$


\section{The uniqueness of subsonic solution}

\subsection{The uniqueness of subsonic solution when $\tau=\infty$}

In this subsection, we use the direct energy method to prove the uniqueness of subsonic solution when the relaxation time $\tau=\infty$, namely, the pure Euler-Poisson case. The model becomes
\begin{equation}
	\left\{
	\begin{aligned}\label{yy1}
		&\left(\frac{1}{g}-\frac{1}{g^3}\right)g_r-\frac{2}{r}=E,\\
		&\left(\frac{1}{m}-\frac{1}{m^3}\right)m_r-\frac{2}{r}=-E,\\
		&\left(r^2E\right)_r=g-m-B(r),\\
		&g(\varepsilon_0)=g(1)=1,m(\varepsilon_0)=\eta_0,
	\end{aligned}
	\right.
\end{equation}
and its equivalent equation is
\begin{equation}
	\left\{
	\begin{aligned}\label{yy2}
		&\left[r^2\left(\frac{1}{g}-\frac{1}{g^3}\right)g_r\right]_r =g-m-B(r)+2,\\
		&\left(\frac{1}{m}-\frac{1}{m^3}\right)m_r-\frac{2}{r}=-E,\\
		&g(\varepsilon_0)=g(1)=1,m(\varepsilon_0)=\eta_0.
	\end{aligned}
	\right.
\end{equation}
{\bf The proof of Theorem \ref{th3} for the case of $\tau=\infty$}.
Assume that $(g_1,m_1)$ and $(g_2,m_2)$ are two pairs of solutions to the equation \eqref{yy1}, it holds by \eqref{yy2}
\begin{align}\label{yy3}
	(r^2(w(g_1)-w(g_2))_r)_r=(g_1-g_2)-(m_1-m_2).	
\end{align}

Taking $(w(g_1)-w(g_2))^+$ as the test function, multiplying \eqref{yy2} by $(w(g_1)-w(g_2))^+$ and integrating it by parts on $[\varepsilon_0,1]$, we derive
\begin{align}\label{yy4}
&-\int_{\varepsilon_0}^{1}r^2\left|(w(g_1)-w(g_2))^{+}_r\right|^2dr\nonumber\\
=&\int_{\varepsilon_0}^{1}(g_1-g_2)\cdot (w(g_1)-w(g_2))^{+}dr-\int_{\varepsilon_0}^{1}(m_1-m_2)\cdot (w(g_1)-w(g_2))^{+}dr\nonumber\\
=&:B_1+B_2.	
\end{align}
According to the monotonicity of function $w$, it holds that $B_1\ge 0$. By $\eqref{yy1}_1$ and $\eqref{yy1}_2$, we have 
\begin{align*}
	(w(g))_r-\frac{4}{r}=-(w(m))_r.
\end{align*}
Integrating the above equation on $(\varepsilon_0,r)$ and collecting $(g_1,m_1)$ and $(g_2,m_2)$, we get
\begin{align}\label{yy5}
	w(g_1)-w(g_2)=w(m_2)-w(m_1).	
\end{align}
For $w(g_1)\ge w(g_2)$, we naturally have $w(m_2)\ge w(m_1)$. Furthermore, based on the monotonicity of function $w$ again, it holds that $B_2\ge 0$. Hence, we have
\begin{align}\label{yy6} -\int_{\varepsilon_0}^{1}r^2\left|(w(g_1)-w(g_2))^{+}_r\right|^2dr\ge 0.
\end{align}
That is
\begin{align}\label{yy7}
	\left\|(w(g_1)-w(g_2))_r^+\right\|_{L^2[\varepsilon_0,1]}=0.
\end{align} 
Therefore, we ultimately obtain that $\left\|(w(g_1)-w(g_2))^+\right\|_{L^2[\varepsilon_0,1]}=0$ by the Poincar\'e inequality, that is, $g_1\le g_2$ on $[\varepsilon_0,1]$.

Similarly, taking the test function as $(w(g_1)-w(g_2))^-$ and multiplying \eqref{yy2} by $(w(g_1)-w(g_2))^-$, then performing the same procedure as above, we get $g_1\ge g_2$ on $[\varepsilon_0,1]$.

Thus, we have proved the uniqueness of subsonic solution to the system \eqref{eq2} when $\tau=\infty$.\hfill $\Box$

\subsection{The uniqueness of subsonic solution when $\frac{j}{\tau}\ll 1$}

In this subsection, we apply the method of exponential variation in \cite[Theorem 10.7]{GT2001} and make specific modifications to prove the uniqueness of subsonic solution when $\frac{j}{\tau}\ll 1$. Here, we take $j_1=j$ and $j_2=-j$ in \eqref{eq2}, then the following model is presented
\begin{equation}
	\left\{
	\begin{aligned}\label{yy8}
		&\left(\frac{1}{g}-\frac{j^2}{g^3}\right)g_r+\frac{j}{\tau}\frac{1}{g}-\frac{2}{r}=E,\\
		&\left(\frac{1}{m}-\frac{j^2}{m^3}\right)m_r-\frac{j}{\tau}\frac{1}{m}-\frac{2}{r}=-E,\\
		&\left(r^2E\right)_r=g-m-B(r),\\
		&g(\varepsilon_0)=g(1)=1,m(\varepsilon_0)=\eta_0,
	\end{aligned}
	\right.
\end{equation}
and its equivalent equation is
\begin{equation}
	\left\{
	\begin{aligned}\label{yy9}
		&\left[r^2\left(\frac{1}{g}-\frac{j^2}{g^3}\right)g_r\right]_r=g-m-B(r)+2-\frac{j}{\tau}\left(\frac{r^2}{g}\right)_r,\\
		&\left(\frac{1}{m}-\frac{j^2}{m^3}\right)m_r-\frac{j}{\tau}\frac{1}{m}-\frac{2}{r}=-E,\\
		&g(\varepsilon_0)=g(1)=1,m(\varepsilon_0)=\eta_0.
	\end{aligned}
	\right.
\end{equation} 

The method in Section 2 can still be applied for \eqref{yy8}, thus the subsonic solution also exists. Next, we focus on the uniqueness of subsonic solution for system \eqref{yy8}.\\
{\bf The proof of Theorem \ref{th3} for the case of $\frac{j}{\tau}\ll 1$}.
Suppose that $(g_1,m_1)$ and $(g_2,m_2)$ are two pairs of solutions of \eqref{yy8}, then from \eqref{yy9}, it holds 
\begin{align}\label{yy10}
	(r^2(\mathcal{F}(g_1)-\mathcal{F}(g_2))_r)_r=(g_1-g_2)-(m_1-m_2)-\frac{j}{\tau}\left(\frac{r^2}{g_1}-\frac{r^2}{g_2}\right)_r,	
\end{align}
where $\mathcal{F}(h):={\rm ln}h+\frac{j^2}{2h^2}$. We define $V:=\mathcal{F}(g_1)-\mathcal{F}(g_2)$ and take $\varphi_h:=\frac{V^+}{V^{+}+h}$ as the test function according to the idea of the comparison principle in \cite[Theorem 10.7]{GT2001}, where $h>0$. Then we can obtain $(\varphi_h)_r=\frac{h}{(V^++h)^2}V^+_r$ and $\left({\rm log}\left(1+\frac{V^+}{h}\right)\right)_r=\frac{V^+_r}{V^{+}+h}$. Multiplying the equation \eqref{yy10} by $\varphi_h$ and integrating by parts on $[\varepsilon_0,1]$, we obtain
\begin{align}\label{yy11}
&h\int_{\varepsilon_0}^{1}r^2\left|\left({\rm log}\left(1+\frac{V^+}{h}\right)\right)_r\right|^2dr+\int_{\varepsilon_0}^{1}(g_1-g_2)\cdot \frac{V^+}{V^++h}dr\nonumber\\
=&\frac{j}{\tau}\int_{\varepsilon_0}^{1}\frac{r^2(g_1-g_2)}{g_1g_2}\cdot\frac{h}{(V^++h)^2}\cdot V^+_rdr+\int_{\varepsilon_0}^{1}(m_1-m_2)\cdot \frac{V^+}{V^++h}dr\nonumber\\
=&:I_1+I_2.	
\end{align}

First we deal with $\int_{\varepsilon_0}^{1}(g_1-g_2)\cdot \frac{V^+}{V^++h}dr$. Assume that $(g_1-g_2)^+\not\equiv 0$, we have
\begin{align}\label{yy12}
	\underset{h\to0^+}{{\rm lim}}\int_{\varepsilon_0}^{1}(g_1-g_2)\cdot \frac{V^+}{V^++h}=\left\|(g_1-g_2)^+\right\|_{L^1[\varepsilon_0,1]}>M>0,
\end{align}
where $M$ denotes a positive constant. Here and below we use $\left\|\cdot\right\|_{L^p}$ to denote $\left\|\cdot\right\|_{L^p[\varepsilon_0,1]}$ for simplicity.

Next, it holds for $I_1$
\begin{align}\label{yy13}
	I_1\le \frac{j}{\tau}\int_{\varepsilon_0}^{1}r^2\left|\frac{h(g_1-g_2)}{V^++h}\right|\cdot\left|\left({\rm log}\left(1+\frac{V^+}{h}\right)\right)_r\right|dr.	
\end{align}
Because $g_1> 1, g_2> 1$ in $(\varepsilon_0,1)$ and $g_1=g_2=1$ at $r=\varepsilon_0$ and $r=1$, then it holds that $\underset{h\to0^+}{{\rm lim}} \left|\frac{g_1-g_2}{V^++h}\right|\to+\infty$ when $j=1$ for $r$ is near $\varepsilon_0$ or $1$. For this reason, the comparison principle in \cite[Theorem 10.7]{GT2001} cannot be directly applied in the following proof. Define the set $D:=\left\{r\in[\varepsilon_0,1]|\left|\frac{g_1-g_2}{V^++h}\right|\le \mathcal{C}\right\}$, where $\mathcal{C}>0$ is to be determined. Using the Young inequality, we have
\begin{align}\label{yy14}
	I_1&\le\frac{j}{\tau} h\int_{D}r^2\left|\frac{g_1-g_2}{V^++h}\right|\cdot\left|\left({\rm log}\left(1+\frac{V^+}{h}\right)\right)_r\right|dr\nonumber\\
	&\,\,\,\,\,\,\,\,\,\,+\frac{j}{\tau}\int_{D^c}r^2h\left|\frac{g_1-g_2}{V^++h}\right|\cdot\left|\left({\rm log}\left(1+\frac{V^+}{h}\right)\right)_r\right|dr\nonumber\\
	&\le\frac{j}{\tau} h\mathcal{C}\int_{D}r^2\left|\left({\rm log}\left(1+\frac{V^+}{h}\right)\right)_r\right|dr\nonumber\\
	&\,\,\,\,\,\,\,\,\,\,+\frac{j}{\tau}\int_{D^c}r^2h\cdot\left[\frac{1}{2\varepsilon}\left(\frac{g_1-g_2}{V^++h}\right)^2+\frac{\varepsilon}{2}\left|\left({\rm log}\left(1+\frac{V^+}{h}\right)\right)_r\right|^2\right]dr,
\end{align}
where $\varepsilon$ is a parameter, and $D^c=[\varepsilon_0,1]/D$. Taking $\varepsilon=\frac{\tau}{j}$, it follows from \eqref{yy14} that
\begin{align}\label{yy15}
	I_1&\le \frac{j}{\tau}h\mathcal{C}\int_{\varepsilon_0}^{1}r^2\left|\left({\rm log}\left(1+\frac{V^+}{h}\right)\right)_r\right|dr+\frac{j^2}{2\tau^2}\int_{D^c}r^2h\cdot\left(\frac{g_1-g_2}{V^++h}\right)^2dr\nonumber\\
	&\,\,\,\,\,\,\,\,\,\,+\frac{h}{2}\int_{\varepsilon_0}^{1}r^2\left|\left({\rm log}\left(1+\frac{V^+}{h}\right)\right)_r\right|^2dr\nonumber\\
	&\le \frac{j}{\tau} h\mathcal{C}\int_{\varepsilon_0}^{1}r^2\left|\left({\rm log}\left(1+\frac{V^+}{h}\right)\right)_r\right|dr+\frac{j^2}{2\tau^2}\int_{D^c}r^2\left|\frac{(g_1-g_2)^2}{V^++h}\right|dr\nonumber\\
	&\,\,\,\,\,\,\,\,\,\,+\frac{h}{2}\int_{\varepsilon_0}^{1}r^2\left|\left({\rm log}\left(1+\frac{V^+}{h}\right)\right)_r\right|^2dr.
\end{align}
According to the Taylor expansion, we have $\mathcal{F}(g_i)=\frac{j^2}{2}+\mathcal{F}'(1)(g_i-1)+\frac{{\mathcal{F}}''(1)}{2}(g_i-1)^2+o(g_i-1)^2$, where $i=1,2$, we can get
\begin{align}\label{yy16}
	\left|\frac{(g_1-g_2)^2}{V^++h}\right|\le \left|\frac{(g_1-g_2)^2}{V^+}\right|\le \left|\frac{(g_1-g_2)^2}{\mathcal{F}(g_1)-\mathcal{F}(g_2)}\right|<C,
\end{align} 
uniformly for $r\in[\varepsilon_0,1]$ and $h>0$. There exists a constant $\mathcal{C}\gg 1$ such that $|D^c|\ll 1$ satisfying
\begin{align}\label{yy17}
	\frac{j^2}{2\tau^2}\int_{D^c}r^2\left|\frac{(g_1-g_2)^2}{V^++h}\right|dr\le \frac{1}{2}\int_{\varepsilon_0}^{1}(g_1-g_2)\cdot \frac{V^+}{V^++h}dr.	
\end{align}
Combining \eqref{yy11}-\eqref{yy17}, we obtain
\begin{align}\label{yy18}
	&\frac{h}{2}\int_{\varepsilon_0}^{1}r^2\left|\left({\rm log}\left(1+\frac{V^+}{h}\right)\right)_r\right|^2dr+\frac{1}{2}\int_{\varepsilon_0}^{1}(g_1-g_2)\frac{V^+}{V^++h}dr\nonumber\\
	\le &\frac{j}{\tau}h\mathcal{C}\int_{\varepsilon_0}^{1}r^2\left|\left({\rm log}\left(1+\frac{V^+}{h}\right)\right)_r\right|dr+I_2.
\end{align}

Now, we give a detailed calculation of $I_2$, which is critical and more complex. Adding $\eqref{yy8}_1$ to $\eqref{yy8}_2$, it yields
\begin{align}\label{yy19}
	(\mathcal{F}(g))_r+\frac{j}{\tau}\left(\frac{1}{g}-\frac{1}{m}\right)-\frac{4}{r}=-(\mathcal{F}(m))_r.	
\end{align}
Integrating \eqref{yy19} on $(\varepsilon_0,r)$ and collecting $(g_1,m_1)$ and $(g_2,m_2)$, we derive
\begin{align}\label{yy20}
	&\mathcal{F}(m_1)-\mathcal{F}(m_2)\nonumber\\
	=&\mathcal{F}(g_2)-\mathcal{F}(g_1)-\frac{j}{\tau}\int_{\varepsilon_0}^{r}\left(\frac{1}{g_1(s)}-\frac{1}{g_2(s)}\right)ds+\frac{j}{\tau}\int_{\varepsilon_0}^{r}\left(\frac{1}{m_1(s)}-\frac{1}{m_2(s)}\right)ds,
\end{align}
and thus
\begin{align}\label{yy21}
	|\mathcal{F}(m_1)-\mathcal{F}(m_2)|\le |\mathcal{F}(g_1)-\mathcal{F}(g_2)|+\frac{j}{\tau}||g_1-g_2||_{L^1}+\frac{j}{\tau}||m_1-m_2||_{L^1}.
\end{align}
Integrating \eqref{yy21} on $[\varepsilon_0,1]$ yields
\begin{align}\label{yy23}
	||\mathcal{F}(m_1)-\mathcal{F}(m_2)||_{L^1}-\frac{j}{\tau}||m_1-m_2||_{L^1}\le||\mathcal{F}(g_1)-\mathcal{F}(g_2)||_{L^1}+\frac{j}{\tau}||g_1-g_2||_{L^1}.	
\end{align}
Since $m_1,m_2\in [\underline{m}+3,\bar{m}]$, there exists a constant $C_1\ge 1$ such that
\begin{align}\label{yy24}
	\frac{1}{C_1}||\mathcal{F}(m_1)-\mathcal{F}(m_2)||_{L^1}\le ||m_1-m_2||_{L^1}\le C_1||\mathcal{F}(m_1)-\mathcal{F}(m_2)||_{L^1}.
\end{align}
When $\frac{j}{\tau}\le\frac{1}{2C_1}$, we have
\begin{align}\label{yy25}
	||\mathcal{F}(m_1)-\mathcal{F}(m_2)||_{L^1}\le  2||\mathcal{F}(g_1)-\mathcal{F}(g_2)||_{L^1}+\frac{2j}{\tau}||g_1-g_2||_{L^1}.
\end{align}

Noting that
\begin{align}\label{yy26}
	I_2=\int_{\varepsilon_0}^{1}\frac{m_1-m_2}{\mathcal{F}(m_1)-\mathcal{F}(m_2)}\cdot(\mathcal{F}(m_1)-\mathcal{F}(m_2))\cdot\frac{V^+}{V^++h}dr,	
\end{align}
because if $\mathcal{F}(m_1)-\mathcal{F}(m_2)=0$, then $m_1-m_2=0$ due to the monotonicity of $\mathcal{F}$. There exists a constant $C_2\ge 1$ such that $\frac{1}{C_2}\le\frac{m_1-m_2}{\mathcal{F}(m_1)-\mathcal{F}(m_2)}\le C_2$. Plugging \eqref{yy20} to \eqref{yy26} and from \eqref{yy24}-\eqref{yy25}, it holds
\begin{align}\label{yy27}
	I_2&\le\int_{\varepsilon_0}^{1}\frac{m_1-m_2}{\mathcal{F}(m_1)-\mathcal{F}(m_2)}\cdot(\mathcal{F}(g_2)-\mathcal{F}(g_1))\cdot\frac{V^+}{V^++h}dr\nonumber\\
	&\,\,\,\,\,\,\,\,\,\,\,\,\,\,+\int_{\varepsilon_0}^{1}\frac{m_1-m_2}{\mathcal{F}(m_1)-\mathcal{F}(m_2)}\cdot|\psi(\tau,r)|\cdot\left|\frac{V^+}{V^++h}\right|dr\nonumber\\
	&\le \int_{\varepsilon_0}^{1}\frac{m_1-m_2}{\mathcal{F}(m_1)-\mathcal{F}(m_2)}\cdot(\mathcal{F}(g_2)-\mathcal{F}(g_1))\cdot\frac{V^+}{V^++h}dr\nonumber\\
	&\,\,\,\,\,\,\,\,\,\,\,\,\,\,+\left(\frac{2j^2}{\tau^2}C_1C_2+\frac{j}{\tau} C_2\right)\cdot||g_1-g_2||_{L^1}+\frac{2j}{\tau} C_1C_2\cdot||\mathcal{F}(g_1)-\mathcal{F}(g_2)||_{L^1},
\end{align}
where $\psi(\tau,r):=-\frac{j}{\tau}\int_{\varepsilon_0}^{r}\left(\frac{1}{g_1(s)}-\frac{1}{g_2(s)}\right)ds+\frac{j}{\tau}\int_{\varepsilon_0}^{r}\left(\frac{1}{m_1(s)}-\frac{1}{m_2(s)}\right)ds$ and $\frac{j}{\tau}\le \frac{1}{2C_1}$. Substituting \eqref{yy27} into \eqref{yy18}, we derive
\begin{align}\label{yy28}
	&\frac{h}{2}\int_{\varepsilon_0}^{1}r^2\left|\left({\rm log}\left(1+\frac{V^+}{h}\right)\right)_r\right|^2dr+\frac{1}{2}\int_{\varepsilon_0}^{1}(g_1-g_2)\cdot\frac{V^+}{V^++h}dr\nonumber\\
	&\,\,\,\,+\frac{1}{C_2}\int_{\varepsilon_0}^{1}(\mathcal{F}(g_1)-\mathcal{F}(g_2))\cdot\frac{V^+}{V^++h}dr\nonumber\\
	\le&\frac{j}{\tau}h\mathcal{C}\int_{\varepsilon_0}^{1}r^2\left|\left({\rm log}\left(1+\frac{V^+}{h}\right)\right)_r\right|dr+\left(\frac{2j^2}{\tau^2}C_1C_2+\frac{j}{\tau} C_2\right)\cdot||g_1-g_2||_{L^1}\nonumber\\
	&\,\,\,\,+\frac{2j}{\tau} C_1C_2\cdot||\mathcal{F}(g_1)-\mathcal{F}(g_2)||_{L^1}.
\end{align}

Similarly, taking $\varphi_h:=\frac{V^-}{V^-+h}$ as the test function for $h>0$ is small enough, and utilizing the same process as above, we get
\begin{align}\label{yy29}
	&\frac{h}{2}\int_{\varepsilon_0}^{1}r^2\left|\left({\rm log}\left(1+\frac{V^-}{h}\right)\right)_r\right|^2dr+\frac{1}{2}\int_{\varepsilon_0}^{1}(g_2-g_1)\cdot \frac{V^-}{V^-+h}dr\nonumber\\
	&\,\,\,\,+\frac{1}{C_2}\int_{\varepsilon_0}^{1}(\mathcal{F}(g_2)-\mathcal{F}(g_1))\cdot\frac{V^-}{V^-+h}dr\nonumber\\
	\le&\frac{j}{\tau} h\tilde{\mathcal{C}}\int_{\varepsilon_0}^{1}r^2\left|\left({\rm log}\left(1+\frac{V^-}{h}\right)\right)_r\right|dr+\left(\frac{2j^2}{\tau^2}C_1C_2+\frac{j}{\tau} C_2\right)\cdot||g_1-g_2||_{L^1}\nonumber\\
	&\,\,\,\,+\frac{2j}{\tau} C_1C_2\cdot||\mathcal{F}(g_1)-\mathcal{F}(g_2)||_{L^1},
\end{align}
where $\tilde{\mathcal{C}}>0$ is another constant. Combining \eqref{yy28} and \eqref{yy29}, we have
\begin{align}\label{yy30}
	&\frac{h}{2}\int_{\varepsilon_0}^{1}r^2\left|\left({\rm log}\left(1+\frac{V^+}{h}\right)\right)_r\right|^2dr+\frac{h}{2}\int_{\varepsilon_0}^{1}r^2\left|\left({\rm log}\left(1+\frac{V^-}{h}\right)\right)_r\right|^2dr\nonumber\\
	&\,\,\,\,+\frac{1}{2}\int_{\varepsilon_0}^{1}(g_1-g_2)\cdot \frac{V^+}{V^++h}dr+\frac{1}{2}\int_{\varepsilon_0}^{1}(g_2-g_1)\cdot \frac{V^-}{V^-+h}dr\nonumber\\
	&\,\,\,\,+\frac{1}{C_2}\int_{\varepsilon_0}^{1}(\mathcal{F}(g_1)-\mathcal{F}(g_2))\cdot\frac{V^+}{V^++h}dr+\frac{1}{C_2}\int_{\varepsilon_0}^{1}(\mathcal{F}(g_2)-\mathcal{F}(g_1))\cdot\frac{V^-}{V^-+h}dr\nonumber\\
	\le&\frac{j}{\tau} h\mathcal{C}\int_{\varepsilon_0}^{1}r^2\left|\left({\rm log}\left(1+\frac{V^+}{h}\right)\right)_r\right|dr+\frac{j}{\tau} h\tilde{\mathcal{C}}\int_{\varepsilon_0}^{1}r^2\left|\left({\rm log}\left(1+\frac{V^-}{h}\right)\right)_r\right|dr\nonumber\\
	&\,\,\,\,+\left(\frac{4j^2}{\tau^2}C_1C_2+\frac{2j}{\tau} C_2\right)\cdot||g_1-g_2||_{L^1}+\frac{4j}{\tau} C_1C_2\cdot||\mathcal{F}(g_1)-\mathcal{F}(g_2)||_{L^1}.
\end{align}
When $\frac{j}{\tau}:={\rm min}\left\{\frac{-C_2+\sqrt{C_2(C_1+C_2)}}{4C_1C_2},\frac{1}{8C_1C_2^2}\right\}$, we have
\begin{align}\label{yy31}
	&\frac{h}{2}\int_{\varepsilon_0}^{1}r^2\left|\left({\rm log}\left(1+\frac{V^+}{h}\right)\right)_r\right|^2dr+\frac{h}{2}\int_{\varepsilon_0}^{1}r^2\left|\left({\rm log}\left(1+\frac{V^-}{h}\right)\right)_r\right|^2dr\nonumber\\
	&\,\,\,\,+\frac{1}{2}\int_{\varepsilon_0}^{1}(g_1-g_2)\cdot \frac{V^+}{V^++h}dr+\frac{1}{2}\int_{\varepsilon_0}^{1}(g_2-g_1)\cdot \frac{V^-}{V^-+h}dr\nonumber\\
	&\,\,\,\,+\frac{1}{C_2}\int_{\varepsilon_0}^{1}(\mathcal{F}(g_1)-\mathcal{F}(g_2))\cdot\frac{V^+}{V^++h}dr+\frac{1}{C_2}\int_{\varepsilon_0}^{1}(\mathcal{F}(g_2)-\mathcal{F}(g_1))\cdot\frac{V^-}{V^-+h}dr\nonumber\\
	\le&\frac{j}{\tau} h\mathcal{C}\int_{\varepsilon_0}^{1}r^2\left|\left({\rm log}\left(1+\frac{V^+}{h}\right)\right)_r\right|dr+\frac{j}{\tau} h\tilde{\mathcal{C}}\int_{\varepsilon_0}^{1}r^2\left|\left({\rm log}\left(1+\frac{V^-}{h}\right)\right)_r\right|dr\nonumber\\
	&\,\,\,\,+\frac{1}{4}||g_1-g_2||_{L^1}+\frac{1}{2C_2}||w(g)_1-w(g)_2||_{L^1}.	
\end{align}
So there exists $h_0$, when $h<h_0$, we derive
\begin{align}\label{yy32}
	\frac{1}{4}||g_1-g_2||_{L^1}\le \frac{1}{2}\left(\int_{\varepsilon_0}^{1}(g_1-g_2)\cdot \frac{V^+}{V^++h}dr+\int_{\varepsilon_0}^{1}(g_2-g_1)\cdot \frac{V^-}{V^-+h}dr\right)
\end{align}
and
\begin{align}\label{yy33}
	&\frac{1}{2C_2}||\mathcal{F}(g_1)-\mathcal{F}(g_2)||_{L^1}\nonumber\\
	\le& \frac{1}{C_2}\left(\int_{\varepsilon_0}^{1}(\mathcal{F}(g_1)-\mathcal{F}(g_2))\cdot\frac{V^+}{V^++h}dr+\int_{\varepsilon_0}^{1}(\mathcal{F}(g_2)-\mathcal{F}(g_1))\cdot\frac{V^-}{V^-+h}dr\right).
\end{align}

Collecting \eqref{yy31}-\eqref{yy33}, we obtain
\begin{align}\label{yy34}
	&\frac{\varepsilon_0^2h}{2}\int_{\varepsilon_0}^{1}\left|\left({\rm log}\left(1+\frac{V^+}{h}\right)\right)_r\right|^2dr+\frac{\varepsilon_0^2h}{2}\int_{\varepsilon_0}^{1}\left|\left({\rm log}\left(1+\frac{V^-}{h}\right)\right)_r\right|^2dr\nonumber\\
	\le&\frac{j}{\tau} h\mathcal{C}\int_{\varepsilon_0}^{1}\left|\left({\rm log}\left(1+\frac{V^+}{h}\right)\right)_r\right|dr+\frac{j}{\tau} h\tilde{\mathcal{C}}\int_{\varepsilon_0}^{1}\left|\left({\rm log}\left(1+\frac{V^-}{h}\right)\right)_r\right|dr.	
\end{align}
That is
\begin{align}\label{yy35}
	&\int_{\varepsilon_0}^{1}\left|\left({\rm log}\left(1+\frac{V^+}{h}\right)\right)_r\right|^2dr+\int_{\varepsilon_0}^{1}\left|\left({\rm log}\left(1+\frac{V^-}{h}\right)\right)_r\right|^2dr\nonumber\\
	\le&\frac{2j \mathcal{C}}{\varepsilon_0^2\tau}\int_{\varepsilon_0}^{1}\left|\left({\rm log}\left(1+\frac{V^+}{h}\right)\right)_r\right|dr+\frac{2j \tilde{\mathcal{C}}}{\varepsilon_0^2\tau}\int_{\varepsilon_0}^{1}\left|\left({\rm log}\left(1+\frac{V^-}{h}\right)\right)_r\right|dr.	
\end{align}
Let $C_3:=\frac{2j \mathcal{C}}{\varepsilon_0^2\tau}$ and $C_4:=\frac{2j \tilde{\mathcal{C}}}{\varepsilon_0^2\tau}$, we get
\begin{align}\label{yy36}
	&\int_{\varepsilon_0}^{1}\left|\left({\rm log}\left(1+\frac{V^+}{h}\right)\right)_r\right|^2dr+\int_{\varepsilon_0}^{1}\left|\left({\rm log}\left(1+\frac{V^-}{h}\right)\right)_r\right|^2dr\nonumber\\
	\le&C_3\int_{\varepsilon_0}^{1}\left|\left({\rm log}\left(1+\frac{V^+}{h}\right)\right)_r\right|dr+C_4\int_{\varepsilon_0}^{1}\left|\left({\rm log}\left(1+\frac{V^-}{h}\right)\right)_r\right|dr.	
\end{align}
From \eqref{yy36}, it can be seen that there must eixst a sequence $\left\{h_i\right\},i=1,2,...$ with $h_i\to 0$, and at least one of the two inequalities 
\begin{align*}
\int_{\varepsilon_0}^{1}\left|\left({\rm log}\left(1+\frac{V^+}{h_i}\right)\right)_r\right|^2dr\le C_3\int_{\varepsilon_0}^{1}\left|\left({\rm log}\left(1+\frac{V^+}{h_i}\right)\right)_r\right|dr
\end{align*}
and 
\begin{align*}
\int_{\varepsilon_0}^{1}\left|\left({\rm log}\left(1+\frac{V^-}{h_i}\right)\right)_r\right|^2dr\le C_4\int_{\varepsilon_0}^{1}\left|\left({\rm log}\left(1+\frac{V^-}{h_i}\right)\right)_r\right|dr
\end{align*}
holds true. Without loss of generality, we suppose
\begin{align}\label{yy37}
	\int_{\varepsilon_0}^{1}\left|\left({\rm log}\left(1+\frac{V^+}{h_i}\right)\right)_r\right|^2dr\le C_3\int_{\varepsilon_0}^{1}\left|\left({\rm log}\left(1+\frac{V^+}{h_i}\right)\right)_r\right|dr	
\end{align}
holds for $h_i\to 0$. Denote $h_i$ also by $h$, according to \eqref{yy37} and using the H\"older inequality, we have 
\begin{align}\label{yy38}
	\left\|\left({\rm log}\left(1+\frac{V^+}{h}\right)\right)_r\right\|^2_{L^2}\le &C_3\int_{\varepsilon_0}^{1}\left|\left({\rm log}\left(1+\frac{V^+}{h}\right)\right)_r\right|dr\nonumber\\
	\le &C_3\left\|\left({\rm log}\left(1+\frac{V^+}{h}\right)\right)_r\right\|_{L^2}.
\end{align}

Finally, we obtain
\begin{align}\label{yy39}
	\left\|\left({\rm log}\left(1+\frac{V^+}{h}\right)\right)_r\right\|_{L^2}\le C_3,
\end{align}
for $h\to 0$. By \eqref{yy39} and the Poincar\'e inequality, we know $\left\|{\rm log}\left(1+\frac{V^+}{h}\right)\right\|_{L^2}$ is uniformly bounded with $h\to 0$, which is impossible, except that $V^+\equiv 0$. This means $\mathcal{F}(g_1)\le \mathcal{F}(g_2)$, and thus $g_1\le g_2$. And from \eqref{yy36}, it is easy to obtain
\begin{align}\label{yy40}
	\int_{\varepsilon_0}^{1}\left|\left({\rm log}\left(1+\frac{V^-}{h}\right)\right)_r\right|^2dr\le C_4\int_{\varepsilon_0}^{1}r^2\left|\left({\rm log}\left(1+\frac{V^-}{h}\right)\right)_r\right|dr.	
\end{align}
Similarly, we will get $V^-\equiv 0$, that is, $\mathcal{F}(g_1)\ge \mathcal{F}(g_2)$, then $g_1\ge g_2$.

Hence, we have proved $g_1=g_2$ on $[\varepsilon_0,1]$. According to \eqref{yy25}, we have $m_1=m_2$ on $[\varepsilon_0,1]$. Therefore, we have proved the uniqueness of subsonic solution to the system \eqref{eq2} when $\frac{j}{\tau}\ll    1$.\hfill $\Box$   


\section*{Acknowledgements}
The research of M.~Mei was supported by NSERC grant RGPIN 2022--03374, the research of K.~Zhang was supported by National Natural Science Foundation of China grant 12271087, and the research of G.~Zhang was supported by National Natural Science Foundation of China grant 11871012.


\appendix

{\small
}

\end{document}